\newtheorem{theorem}{Theorem}
\newtheorem*{reptheorem}{Theorem \ref{thm:sds_np_perfect_graphs}}
\newtheorem{lemma}[theorem]{Lemma}
\newtheorem{corollary}[theorem]{Corollary}
\newtheorem{assumption}[theorem]{Assumption}
\newtheorem*{assumption*}{Assumption}
\theoremstyle{definition}
\newtheorem{definition}[theorem]{Definition}
\newtheorem{idea}[theorem]{Explanation}
\newcommand{\best}[1]{\operatorname{best}\{#1\}}
\newcommand{\instate}{{2}}
\newcommand{\domstate}{{1}}
\newcommand{\notdomstate}{{0}}
\newcommand{\NP}{\textsf{NP}\xspace}
\newcommand{\Pp}{\textsf{P}\xspace}
\newcommand{\minVCprob}{\textsc{minVertexCover}\xspace}
\newcommand{\VCprob}{\textsc{VertexCover}\xspace}
\newcommand{\sdset}{simultaneous dominating set\xspace}
\newcommand{\sdsets}{simultaneous dominating sets\xspace}
\newcommand{\Sdsets}{Simultaneous dominating sets\xspace}
\newcommand{\SDset}{SD-set\xspace}
\newcommand{\SDsets}{SD-sets\xspace}
\newcommand{\simdom}{simultaneously dominate\xspace}
\newcommand{\simdoms}{simultaneously dominates\xspace}
\newcommand{\simdomed}{simultaneously dominated\xspace}
\newcommand{\simdoming}{simultaneously dominating\xspace}
\newcommand{\sdsprob}{\textsc{SimultaneousDominatingSet}\xspace}
\newcommand{\minsdsprob}{\textsc{minSimultaneousDominatingSet}\xspace}
\newcommand{\rSDS}{respecting SD-set\xspace}
\newcommand{\rsds}{respecting simultaneous dominating set\xspace}
\newcommand{\rsdss}{respecting simultaneous dominating sets\xspace}
\newcommand{\cf}{\textit{cf.}}
\newcommand{\ie}{\textit{i.e.}}
\newcommand{\CV}{\operatorname{CV}}
\newcommand{\NCV}{\operatorname{NCV}}
\newcommand{\colorone}{cyan}
\newcommand{\coloronetext}{blue\xspace}
\newcommand{\colortwo}{yellow}
\newcommand{\colortwotext}{yellow\xspace}
\newcommand{\problemtitle}[1]{\gdef\@problemtitle{#1}}
\newcommand{\probleminput}[1]{\gdef\@probleminput{#1}}
\newcommand{\problemquestion}[1]{\gdef\@problemquestion{#1}}
\newcommand{\problemcomplexity}[1]{\gdef\@problemcomplexity{#1}}
	\par\addvspace{.5\baselineskip}
	\par\addvspace{.5\baselineskip}
\renewcommand{\labelenumi}{(\roman{enumi})}
\newenvironment{thmenum}
{\begin{enumerate}[label=\upshape(\roman*),ref=\thetheorem.(\roman*),noitemsep]}{\end{enumerate}}
\newenvironment{thmenumnonum}
{\begin{enumerate}[label=\upshape(\roman*),ref=(\roman*),noitemsep]}{\end{enumerate}}
\author[1]{Sebastian S. Johann}
\author[1]{Sven O. Krumke}
\author[1]{Manuel Streicher}
\affil[1]{Technische Universit\"at Kaiserslautern}
\title{Simultaneously Dominating all Spanning Trees of a Graph}
\begin{document}

\maketitle

\begin{abstract}%
	We investigate the problem of simultaneously dominating all spanning trees of a given graph.
	We prove that on $2$-connected graphs, a subset of the vertices dominates all spanning trees of the graph if and only if it is a vertex cover. 
	Using this fact we present an exact algorithm that finds a simultaneous dominating set of minimum size using an oracle for finding a minimum vertex cover.
	The algorithm can be implemented to run in polynomial time on several graph classes, such as bipartite or chordal graphs.
	We prove that there is no polynomial time algorithm that finds a minimum simultaneous dominating set on perfect graphs unless $\Pp=\NP$.
	Finally, we provide a $2$-approximation algorithm for finding a minimum simultaneous dominating set.
	
\end{abstract}
{\textit{Keywords: Dominating Set, Simultaneous Domination, Factor Domination}}

\section{Introduction}
A dominating set in a graph $G$ is a subset $S\subseteq V(G)$ of the vertices such that every vertex that is not contained in $S$ has a neighbor in $S$.
The dominating set problem, that aims to find a minimum dominating set in a graph, was formalized by Berge in 1958 \cite{berge1958} and Ore in 1962 \cite{ore1962theory}.
Since then several variants of the dominating set problem have been studied.
One example is the independent dominating set problem, in which additionally no two vertices in the searched set may be adjacent, \cf~\cite{GODDARD2013839}.
Another example is the total dominating set problem in which every vertex has to be adjacent to a vertex in the searched set, \cf~\cite{HENNING200932}.
The dominating set problem is well known to be \NP-complete, see \cite{Garey79}, and so are most of it variants.

In this paper we consider a variation of the dominating set problem in which we seek to simultaneously dominate all spanning trees of a graph.
The concept of simultaneous domination in graphs was independently introduced by Sampathkumar in \cite{sampathkumar:89} under the name global domination and by Brigham and Dutton in \cite{BRIGHAM1990127} who used the term factor domination.
Following~\cite{BRIGHAM1990127}, given a graph $G$ and a partition of its edge set~$E_1,\dots, E_k$, a subset of the vertices is a \emph{factor dominating set} if it is dominating for all graphs $(V(G),E_i)$.
Whereas a susbet of the vertices is a \emph{global dominating set} if it is a subset of the vertices which is dominating in $G$ and its complement.
Later on the term factor domination has also been used for subsets of the vertices that dominate some set of arbitrary subgraphs of $G$ on the same vertex set, see \textit{e.g.}~\cite{dankelmann:06} and~\cite{caro:14}. 
In our studies we use the term \emph{simultaneous domination}, as in our definition the edge sets of the subgraphs are not required to be disjoint.
In the simultaneous dominating set problem regarded here we are given a graph $G$ and we aim for a minimum subset of vertices that is a dominating set in every spanning tree of $G$. As we only regard simultaneous domination of all spanning trees, in the following we often omit \emph{all spanning trees} in order to shorten notation.

Simultaneous domination of all spanning trees has previously not been regarded in the literature. 
We prove that in a $2$-connected graph $G$ a set $S\subseteq V(G)$ dominates all spanning trees if and only if it is a vertex cover. 
On general graphs we prove that the size of a minimum size vertex cover and a minimum size \sdset may differ by a factor of two and give an example that this bound is tight.
We utilize the relation of \sdsets to vertex covers in order to derive an algorithm that finds a minimum size simultaneous dominating set. 
The algorithm works on the block graph of a graph and uses an oracle for \VCprob.
It can be implemented to run in polynomial time on bipartite graphs, chordal graphs and graphs with bounded treewidth. 
The polynomial running times strongly rely on the fact that \VCprob is polynomial time solvable on these classes. 
However, the arguments used are not applicable to all graph classes on which \VCprob is solvable in polynomial time. 
In particular, it is well known that \VCprob is polynomial time solvable on perfect graphs, \cf~\cite{schrijver-book}. 
Yet, one of our main results proves that the same does not hold for simultaneous domination.
\begin{reptheorem}
	\sdsprob is \NP-complete when restricted to perfect graphs.	
\end{reptheorem}
The theorem proves that \VCprob and \sdsprob are not equivalent from the point of view of complexity theory. 
Another direct consequence of the theorem is, that although \sdsprob is polynomial time solvable on $2$-connected, perfect graphs, it is \NP-hard on all perfect graphs. 
In a sense, one could say that the problem significantly simplifies when restricting it to $2$-connected graphs. 
This is a property that is rarely seen among graph theoretic problems, as polynomial time solvability for problems on $2$-connected graphs often implies polynomial time solvability of the corresponding problem on all graphs.

It is well known that \minVCprob may be approximated by a factor of $2$, \cf~\cite{schrijver-book}, as well as that it cannot be approximated by any constant factor smaller than $2$, provided the unique games conjecture holds, \cf~\cite{khot:08}. 
Here, we provide a $2$-approximation for \minsdsprob that is based on LP-rounding. Note that the $2$-approximability is not immediately implied from the $2$-approximability of \minVCprob, as the size of a minimum simultaneous dominating set and the size of a minimum vertex cover may differ by up to a factor of $2$, \cf~Theorem~\ref{sds_spts:all_spanning_trees:thm:2timesVC}.

\paragraph{Outline}
After we state some basic definitions in Section~\ref{sec::preliminaries}, we focus on the characterization and the complexity of \sdsprob in Section~\ref{complexity}. 
In Section~\ref{sec::sds_alg} we present an algorithm to find a minimum size \sdset on general graphs using an oracle for computing a minimum size vertex cover.
Afterwards, we show in Section~\ref{sec::polynomial} that we can solve \sdsprob in polynomial time on bipartite graphs, on chordal graphs and on graphs of bounded treewidth.
Finally, we present a $2$-approximation algorithm for \minsdsprob in Section \ref{sec::sds_approx}.

\section{Preliminaries}\label{sec::preliminaries}
Most of our notation is standard graph terminology which can be found in \cite{Diestel17}.
For an introduction to graph theory from the algorithmic point of view we refer to \cite{Krumke2012}.
Nevertheless, we recall some basic notations in the following.
All graphs under consideration are undirected and simple.
For a graph $G$, we refer to its vertex set by $V(G)$ and to its edge set by $E(G)$.
For an edge joining vertices $u,v\in V(G)$ we write $uv$.
For a subset $S\subseteq V(G)$ we denote by $G[S]$ the graph \emph{induced} by $S$, that has vertex set $S$ and contains all edges in $G$ joining vertices in $S$.
Further, we write $G-S$ for the graph $G[V(G)\setminus S]$ and for $E\subseteq E(G)$ we write $G-E$ for for the graph with vertex set $V(G)$ and edge set $E(G)\setminus E$.
To simplify notation we write $G-v$ and $G-e$ instead of $G-\{v\}$ and $G-\{e\}$ for $v\in V(G)$ and $e\in E(G)$.

A \emph{path} $P=u_0\ldots, u_k$ is a graph with vertex set $V(P)=\{u_0,u_1,\ldots,u_k\}$ and edge set $E(P)=\{u_0u_1,u_1u_2,\ldots,u_{k-1}u_k\}$, where all the $u_i$ are distinct. 
A graph $G$ is called \emph{connected} if any two vertices are linked by a path. As it facilitates arguments in this contribution we make the following assumption for connected graphs.
\begin{assumption}\label{ass:conn_2}
	Any connected graph contains at least two vertices.
\end{assumption}
For a connected graph $G$ a vertex $v\in V(G)$ is called a \emph{cutvertex} if $G-v$ is not connected.
We call a graph \emph{$2$-connected} if it has at least three vertices and does not contain a cutvertex.
A \emph{block} of~$G$ is a maximal connected subgraph of $G$ that does not contain a cutvertex.
The \emph{block graph} of $G$ is a bipartite graph $T$, where one bipartition set contains the cutvertices of $G$ and the other bipartition set consists of the blocks of~$G$.
For a cutvertex $v$ in $G$ and a block $B$ we have $vB\in E(T)$ if and only if $v$ is contained in~$B$.
If~$G$ is connected, then the block graph of $G$ is a tree,~\mbox{\cf~\cite{Diestel17}}.
Note that the blocks and the block graph can be computed in linear time, \cf~\cite{hopcroft:73}.
We call a block \emph{endpoint} if it is a leaf in $T$.
Further, we call the unique cutvertex in an endpoint its \emph{connection vertex}.

A subset $D \subseteq V(G)$ \emph{dominates} a vertex $v$ if $v\in D$ or $v$ is adjacent to a vertex in $D$.
The set $D$ is a \emph{dominating set} of $G$ if every vertex of $G$ is contained in~$D$ or dominated by a vertex in $D$.
Further, a subset $C\subseteq V(G)$ is a \emph{vertex cover} of $G$ if every edge has an endvertex in $C$.
An alternative characterization for a vertex cover $C$ in a graph is:
\begin{align}
	C \text{ is a vertex cover in } G \Leftrightarrow \forall v\in V(G)\colon v\in C \text{ or } N(v)\in C.\label{vc}
\end{align}

Further, we use standard notation and basic results from linear and integer programming.
For a further introduction into this topic we refer to \cite{grotschel:88}.

\section{Characterization and Complexity of \sdsprob}\label{complexity}

In this section we introduce the basic definitions of simultaneous domination.
Afterwards, we provide an alternative characterization for a \sdset and analyze the complexity of the related decision problem.

\begin{definition}\label{def:sds}
Let $G$ be a connected graph and $S \subseteq V(G)$.
We call $S$ a \emph{\sdset} or \emph{\SDset} of $G$ if~$S$ is a dominating set in every spanning tree of $G$.
A vertex~$v\in V(G)$ is called \emph{\simdomed} by~$S$ if $v$ is dominated by $S$ in every spanning tree of~$G$.
Similarly, a subset~$V^\prime\subseteq V(G)$ is \simdomed by~$S$ if every vertex in $V^\prime$ is \simdomed by~$S$.
\end{definition}

During this article we mainly investigate \emph{\minsdsprob} which consists of finding a \sdset of minimum size.
The decision version of this problem is defined as follows:
\begin{decisionproblem}
	\problemtitle{\sdsprob}
	\probleminput{A connected graph $G$ and an integer $B\in \mathbb{N}$.}
	\problemquestion{Is there a subset $S\subseteq V(G)$ with $|S|\leq B$ such that $S$ is a \sdset in $G$?}
	\problemcomplexity{}
\end{decisionproblem}

Initially, it is not clear if \sdsprob is contained in \NP.
As a graph can have an exponential number of spanning trees we cannot simply test dominance of a given solution in every spanning tree.
However, the following theorem enables us to verify if a given set $S$ is a \sdset in polynomial time.

\begin{theorem}\label{sdscond}
	Let $G$ be a connected graph. A set $S\subseteq V(G)$ is a \sdset if and only if for every $v\in V(G)$ it holds true that $v\in S$ or:
	\begin{thmenumnonum}
		\item $v$ is not a cutvertex in $G$ and $N(v)\subseteq S$, or\label{sdscond_i}
		\item $v$ is a cutvertex in $G$ that is contained in the blocks $B_1,\ldots,B_k$ and for some $i\in \{1,\ldots,k\}$ we have $N_{B_i}(v)\subseteq S$.\label{sdscond_ii}
	\end{thmenumnonum}
\end{theorem}
\begin{proof}
	By Assumption~\ref{ass:conn_2} any connected graph contains at least two vertices. Thus, the neighborhood of a vertex in a connected graph is never empty.
	Let $v \in V(G)\setminus S$ be a vertex that is not a cutvertex in $G$.
	We claim that the vertex~$v$ is \simdomed by $S$ if and only if all neighbors of~$v$ are in $S$:

	If all neighbors of $v$ are contained in $S$, then $v$ is clearly dominated by~$S$ in every spanning tree of~$G$ since there is at least one edge between~$v$ and one of its neighbors in every spanning tree.
	Conversely, assume that $v$ is \simdomed by $S$.
	Since $G-v$ is connected there is a spanning tree of~$G-v$.
	We obtain a spanning tree of $G$ by adding $v$ and any edge incident to $v$ in~$G$.
	Thus, for any neighbor $u$ of $v$ in $G$ there is at least one spanning tree of $G$ such that $u$ is the only neighbor of~$v$.
	Since~$v$ is dominated in every spanning tree of~$G$ and $v\notin S$ we get that all neighbors of $v$ are contained in~$S$.

	Next consider the case that $v$ is a cutvertex and is further contained in the blocks $B_1,\ldots,B_k$.
	We show that $v$ is \simdomed by $S$ if and only if there is an~$i\in\{1,\ldots,k\}$ such that~${w\in S}$ for all $w\in N_{B_i}(v)$:

	If for some~${i\in \{1,\ldots,k\}}$ we have~$w\in S$ for all~${w\in N_{B_i}(v)}$, then $v$ is clearly dominated by $S$ in every spanning tree of $G$ since there is at least one edge between $v$ and one of its neighbors in the block $B_i$.
	Conversely, assume that for each~$i\in\{1,\ldots,k\}$ there is an $w_i\in N_{B_i}(v)$ that is not in $S$.
	We obtain a spanning tree~$T$ of~$G$ by using a spanning forest in~$G-v$ and adding the vertex $v$ and for every $i\in\{1,\ldots,k\}$ the edge~$vw_i$.
	The vertex $v$ is not dominated in $T$ since neither the vertex~$v$ nor any of its neighbors $w_i$ is in~$S$.
	Hence, $S$ is not an \SDset.
\end{proof}

By Theorem~\ref{sdscond} we can verify for a graph $G$ if a given set $S\subseteq V(G)$ is an \SDset in polynomial time by simply checking conditions \ref{sdscond_i} and \ref{sdscond_ii} of Theorem \ref{sdscond} for every vertex $v\in V(G)\setminus S$.
Recall, that for a graph $G$ a set $C\subseteq V(G)$ is a vertex cover if and only if for every vertex $v\in V(G)$ it holds that $v\in C$ or $N_G(v)\subseteq C$, \cf~\eqref{vc}.
Theorem \ref{sdscond} asks for exactly the same for non-cutvertices and hence we get:
\begin{corollary}\label{cor:sds-vc}
	If $G$ is a $2$-connected graph, then $S\subseteq V(G)$ is a \sdset if and only if $S$ is a vertex cover in $G$.\qed
\end{corollary}
\VCprob is one of Karp's 21 \NP-complete problems, \cf~\cite{Garey79}.
It can be observed that the problem remains \NP-hard on $2$-connected graphs and thus:
\begin{corollary}\label{cor:np}
	\sdsprob is \NP-complete.\qed
\end{corollary}
Corollary \ref{cor:sds-vc} reveals a close connection between \sdsprob and \VCprob.
However, it is not immediately clear if and how we may use this relation to efficiently compute a minimum size \sdset in graphs on which \VCprob can be solved in polynomial time.
We later see examples of such possibilities for certain graph classes such as bipartite or chordal graphs, but in the following we proof that this is not always the case. 

Recall the definition of a perfect graph.
A graph $G$ is \emph{perfect} if for every induced subgraph the chromatic number equals the clique number.  The chromatic number is the minimum number of labels needed, such that every vertex has an assigned label and no two adjacent vertices have the same label. The clique number is the size of a largest induced subgraph that is complete.
It is well known that \minVCprob can be solved in polynomial time on perfect graphs, \cf~\cite{schrijver-book}.
However, in the following we prove that \sdsprob is \NP-complete when restricted perfect graphs.

To do so we make use of the Strong Perfect Graph Theorem proven by Chudnovsky \textit{et al.} in \cite{chudnovsky:06}.
Recall that for a graph $G$ an \emph{odd hole} of~$G$ is an induced subgraph of~$G$ which is a cycle of odd length at least $5$. An \emph{odd antihole} of~$G$ is an induced subgraph of~$G$ whose complement is an odd hole in $\bar{G}$.
\begin{theorem}[Strong perfect graph theorem, \cite{chudnovsky:06}]\label{strongperfectgraphtheorem}
	A graph $G$ is perfect if and only if $G$ has no odd hole and no odd antihole. \qed
\end{theorem}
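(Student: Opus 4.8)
The forward direction is routine and I would dispatch it first. Perfection is a hereditary property, so it suffices to observe that odd holes and odd antiholes are themselves imperfect. An odd hole $C_{2k+1}$ with $k\geq 2$ has $\omega=2$ but $\chi=3$, and its complement $\overline{C_{2k+1}}$ has $\omega=k$ but $\chi=k+1$; hence neither can occur as an induced subgraph of a perfect graph. All the substance therefore lies in the converse: every \emph{Berge} graph — one with no odd hole and no odd antihole — is perfect.

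For the converse I would use the minimal-counterexample strategy. Suppose some Berge graph is imperfect and let $G$ be one with the fewest vertices; then $G$ is \emph{minimally imperfect}, meaning every proper induced subgraph of $G$ is perfect while $G$ is not. By the Weak Perfect Graph Theorem of Lov\'asz, $\overline{G}$ is perfect iff $G$ is, and the Berge property is self-complementary (an odd hole of $G$ is an odd antihole of $\overline{G}$); so the class of counterexamples is closed under complementation, a symmetry I would exploit throughout to halve the case work. I would also import the known rigidity of minimally imperfect graphs (Padberg; Bland--Huang--Trotter): such a graph on $n$ vertices satisfies $n=\omega\alpha+1$ and has a highly constrained partition structure into maximum cliques and maximum stable sets.

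The heart of the argument is a structural dichotomy, the \emph{Decomposition Theorem}: every Berge graph is either \emph{basic} or admits one of a short list of decompositions. The basic classes are bipartite graphs, line graphs of bipartite graphs, double split graphs, and the complements of the first two; each I would verify directly to be perfect, so a minimal counterexample cannot be basic. The admissible decompositions are the $2$-join, the $2$-join in the complement, and the balanced skew partition. The plan then reduces to proving two things: $(a)$ that each of these decompositions, if present in a minimally imperfect graph, yields a contradiction; and $(b)$ the Decomposition Theorem itself. For $(a)$, the $2$-join case rests on the fact (Cornu\'ejols--Cunningham) that a graph with a $2$-join is perfect precisely when its blocks are, so a minimal imperfect graph admits none; the balanced skew partition case amounts to settling Chv\'atal's skew-partition conjecture — that no minimal imperfect graph has a balanced skew partition — which must be established en route and clashes with the partitionability structure above.

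The main obstacle, where essentially all of the difficulty sits, is part $(b)$. I would prove the Decomposition Theorem by an enormous induction organized around which small configurations — prisms ($\overline{C_6}$), wheels, and certain ``stretched'' induced paths — the Berge graph contains. The method is to fix a suitable induced subgraph, then analyze in minute detail how every remaining vertex may attach to it without creating an odd hole or odd antihole; from these attachment constraints one extracts either a basic structure or one of the three decompositions. I expect the overwhelming majority of the work, and the genuine novelty, to live in this exhaustive configuration-by-configuration case analysis certifying the decomposition, while the perfection-preservation arguments of part $(a)$, though delicate, are comparatively self-contained once the decompositions are in hand.
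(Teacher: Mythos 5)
You should know at the outset that the paper contains no proof of this statement to compare against: it is quoted as an external black box from Chudnovsky, Robertson, Seymour and Thomas (the \qed attached to the citation signals exactly this), and is used in the paper only to justify the chordal-graph preprocessing of Lemma~\ref{lemma::chordal->perfect}. So the only fair benchmark is the original Annals proof, and measured against that your text is an accurate \emph{roadmap} of it rather than a proof. Your forward direction is genuinely complete: perfection is hereditary, $C_{2k+1}$ has $\omega=2<3=\chi$, and $\overline{C_{2k+1}}$ has $\omega=k<k+1=\chi$, so no Berge condition can fail in a perfect graph. But for the converse, everything is deferred. You correctly state the Decomposition Theorem (every Berge graph is basic, or one of $G$, $\overline{G}$ admits a proper $2$-join, or $G$ admits a balanced skew partition), correctly identify it as the heart of the matter, and then say you would establish it by ``an enormous induction'' over attachments to prisms, wheels and stretched configurations. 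That one sentence is where essentially the whole theorem lives --- roughly a hundred and fifty pages of case analysis --- and nothing in the proposal executes any of it. As a proof attempt, the gap is coextensive with the result itself; as an annotated bibliography of the known proof, it is sound.

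One substantive inaccuracy is worth flagging beyond the incompleteness. You suggest that the balanced-skew-partition case of part $(a)$ ``clashes with the partitionability structure'' of minimally imperfect graphs, i.e.\ that it falls out of the Padberg and Bland--Huang--Trotter rigidity facts. It does not: Chv\'atal's skew-partition conjecture was \emph{not} available to Chudnovsky et al., and what their proof actually establishes is the weaker statement that a \emph{minimum} imperfect Berge graph has no balanced skew partition --- an argument that crucially invokes the minimality hypothesis (perfection of all smaller Berge graphs) together with tools such as Lov\'asz's replication lemma and the Roussel--Rubio lemma, not mere partitionability arithmetic. A proof organized as you describe, treating that step as a routine consequence of $n=\alpha\omega+1$, would fail at exactly that point. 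Similarly, ``each basic class I would verify directly'' conceals real if standard work (line graphs of bipartite graphs via K\"onig's theorem, double split graphs by hand). None of this impugns the paper, which is entitled to cite the theorem; but if your intent was to supply the proof the paper omits, the proposal does not yet do so.
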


\begin{theorem}\label{thm:sds_np_perfect_graphs}
	\sdsprob is \NP-complete when restricted to perfect graphs.	
\end{theorem}
\begin{proof}
	By Theorem~\ref{sdscond} \sdsprob restricted to perfect graphs is contained in \NP.
	
	It is well known that \VCprob is \NP-complete and it can be observed that it remains \NP-complete on $2$-connected graphs.
	Therefore, let~$G$ be a simple, $2$-connected graph.
	For every edge $uv\in E(G)$ we denote by $H_{uv}$ the graph with
	\begin{align*}
		& V(H_{uv}) = \{u,v,x_1^{uv},x_2^{uv},x_3^{uv},x_4^{uv},y_1^{uv},y_2^{uv},z_1^{uv},z_2^{uv}\} \text{ and}\\
		& E(H_{uv}) = \{ ux_1^{uv}, x_1^{uv}x_2^{uv}, x_2^{uv}x_3^{uv}, x_3^{uv}x_4^{uv}, x_4^{uv}v, , x_1^{uv}y_1^{uv}, y_1^{uv}y_2^{uv}, x_3^{uv}z_1^{uv}, z_1^{uv}z_2^{uv} \}.
	\end{align*}
	The graph $H_{uv}$ is illustrated in Figure~\ref{tikz:sds_np_perfect_graph_G_help_construction}.
	\begin{figure}[htbp]
		\centering
		\begin{minipage}{.45\textwidth}
			\centering
			\begin{tikzpicture}
				\draw[inner sep=1pt]
				(0,0) node[circle,draw,minimum size=20pt,fill=\colorone!60] (u) {\small$u$}
				(1,0) node[circle,draw,minimum size=15pt] (p1) {\scriptsize$x_1^{uv}$}
				(2,0) node[circle,draw,minimum size=15pt,fill=\colorone!60] (p2) {\scriptsize$x_2^{uv}$}
				(3,0) node[circle,draw,minimum size=15pt] (p3) {\scriptsize$x_3^{uv}$}
				(4,0) node[circle,draw,minimum size=15pt,fill=\colorone!60] (p4) {\scriptsize$x_4^{uv}$}
				(5,0) node[circle,draw,minimum size=20pt] (v) {\small$v$}
				(1,1) node[circle,draw,minimum size=15pt,fill=\colorone!60] (y1) {\scriptsize$y_1^{uv}$}
				(1,2) node[circle,draw,minimum size=15pt] (y2) {\scriptsize$y_2^{uv}$}
				(3,1) node[circle,draw,minimum size=15pt,fill=\colorone!60] (z1) {\scriptsize$z_1^{uv}$}
				(3,2) node[circle,draw,minimum size=15pt] (z2) {\scriptsize$z_2^{uv}$}
				;
				
				\draw[-]
				(u) edge (p1)
				(p1) edge (p2)
				(p2) edge (p3)
				(p3) edge (p4)
				(p4) edge (v)
				(p1) edge (y1)
				(y1) edge (y2)
				(p3) edge (z1)
				(z1) edge (z2)
				(p1) edge[bend left=35] (p3)
				;
				
			\end{tikzpicture}
		\end{minipage}
		\begin{minipage}{.45\textwidth}
			\centering
			\begin{tikzpicture}
				\draw[inner sep=1pt]
				(0,0) node[circle,draw,minimum size=20pt] (u) {\small$u$}
				(1,0) node[circle,draw,minimum size=15pt,fill=\colorone!60] (p1) {\scriptsize$x_1^{uv}$}
				(2,0) node[circle,draw,minimum size=15pt] (p2) {\scriptsize$x_2^{uv}$}
				(3,0) node[circle,draw,minimum size=15pt,fill=\colorone!60] (p3) {\scriptsize$x_3^{uv}$}
				(4,0) node[circle,draw,minimum size=15pt] (p4) {\scriptsize$x_4^{uv}$}
				(5,0) node[circle,draw,minimum size=20pt,fill=\colorone!60] (v) {\small$v$}
				(1,1) node[circle,draw,minimum size=15pt,fill=\colorone!60] (y1) {\scriptsize$y_1^{uv}$}
				(1,2) node[circle,draw,minimum size=15pt] (y2) {\scriptsize$y_2^{uv}$}
				(3,1) node[circle,draw,minimum size=15pt,fill=\colorone!60] (z1) {\scriptsize$z_1^{uv}$}
				(3,2) node[circle,draw,minimum size=15pt] (z2) {\scriptsize$z_2^{uv}$}
				;
				
				\draw[-]
				(u) edge (p1)
				(p1) edge (p2)
				(p2) edge (p3)
				(p3) edge (p4)
				(p4) edge (v)
				(p1) edge (y1)
				(y1) edge (y2)
				(p3) edge (z1)
				(z1) edge (z2)
				(p1) edge[bend left=35] (p3)
				;
				
			\end{tikzpicture}
		\end{minipage}
		
		\caption{Construction of two \SDsets in $H_{uv}$.}
		\label{tikz:sds_np_perfect_graph_G_help_construction}
	\end{figure}
	
	\noindent We regard the graph
	\begin{align*}
		H=\bigcup_{uv\in E(G)} H_{uv}
	\end{align*}
	and claim that $H$ is perfect.
	Further, we claim that $G$ has a vertex cover of size at most $k$ if and only if $H$ has an \SDset of size at most $k+|E(G)|$.
	
	To show that the graph $H$ is perfect we use the Strong Perfect Graph Theorem, \cf~Theorem~\ref{strongperfectgraphtheorem}, and show that there is no odd hole nor an odd antihole.
	First note that the only vertices in $H$ that can possibly have degree larger than $5$ are the ones also contained in $G$.
	As none of these are adjacent in $H$ there cannot be an odd antihole of size $7$ or larger.
	Further, the only cycle completely contained inside of some $H_{uv}$ for $uv\in E(G)$ is of the form $x_1^{uv}x_2^{uv}x_3^{uv}x_1^{uv}$ and has length $3$.
	Any other cordless cycle $C$ in $H$ that passes through $H_{uv}$ has to use the path $ux_1^{uv}x_3^{uv}x_4^{uv}v$ as otherwise it contains the chord $x_1^{uv}x_3^{uv}$.
	However, this path has length $4$ and since this holds for every $H_{uv}$ we get that $C$ has even length and there is no odd hole in $H$.
	Since an odd anti-hole of size $5$ is the same as an odd hole of size $5$ it already follows by the Strong Perfect Graph Theorem that $H$ is perfect.
	
	Next we show that $G$ has a vertex cover of size at most $k$ if and only if $H$ has an \SDset of size at most $k+4|E(G)|$.
	
	Let $C\subseteq V(G)$ be a vertex cover of $G$ of size $k$.
	Consider the set $S$ that contains all vertices from $C$ and for every $uv\in E(G)$ the vertices $x_1^{uv},x_3^{uv},y_1^{uv},z_1^{uv}$ if $u\notin C$ and $x_2^{uv},x_4^{uv},y_1^{uv},z_1^{uv}$ if $u\in C$, \cf~Figure~\ref{tikz:sds_np_perfect_graph_G_help_construction}.
	Clearly, $|S|=k+4|E(G)|$ and we claim that~$S$ is a \sdset of $H$.
	To this end regard some $uv\in E(G)$.
	Since $y_1^{uv}$ and $z_1^{uv}$ are in $S$ and $x_1^{uv}$ and $x_3^{uv}$ are cutvertices in $H$ Theorem~\ref{sdscond} implies that $x_1^{uv}$ and $x_3^{uv}$ are \simdomed.
	For $w\in\{ x_2^{uv},x_4^{uv},y_1^{uv},y_2^{uv},z_1^{uv},z_2^{uv}\}$ we have that either $w$ itself is in $S$ or all neighbors of $w$ are in $S$.
	Thus, by Theorem~\ref{sdscond} all vertices in $H_{uv}$ except possibly $u$ and $v$ are \simdomed.
	If $u\notin S$, then by definition of $S$ all neighbors of $u$ in $H$ are contained in $S$ and $u$ is \simdomed.
	If $v\notin S$, then it must be the case that $u\in S$ as $C\subset S$ is a vertex cover of~$G$.
	Again it follows by the definition of $S$ that all neighbors of $v$ are contained in $S$ and $w$ is \simdomed by $S$.
	Overall, we conclude that $S$ is a \sdset in $H$.
	
	Now let $S\subseteq V(H)$ be an \SDset in $H$ of size $k+|E(G)|$.
	First we show that for each $uv\in E(G)$ we have $|S\cap (V(H_{uv})\setminus\{u,v\})|\geq 4$.
	To this end note, that
	\begin{align}
		|S\cap\{y_1^{uv},y_2^{uv},z_1^{uv},z_2^{uv}\}|\geq 2. \label{eq:geq2}
	\end{align}
	Further, $x_2^{uv}$ and $x_4^{uv}$ are not cutvertices in $H$ as $G$ is $2$-connected.
	Thus, by Theorem~\ref{sdscond}, we either have $x_3^{uv}\notin S$ in which case $x_2^{uv},x_4^{uv}\in S$, or $x_3^{uv}\in S$ in which case $x_1^{uv}\in S$ or $x_{2}^{uv}\in S$.
	We conclude that for each $uv\in E(G)$ we have
	\begin{align}
		|S\cap (V(H_{uv})\setminus \{u,v\})|\geq 4.\label{eq:geq4}
	\end{align}
	As $G$ is $2$-connected we have for every edge $uv\in E(G)$ that neither $u$ nor $v$ is a cutvertex in $H$.
	If both, $u$ and $v$, are not contained in $S$, we have by Theorem~\ref{sdscond} that $x_1^{uv}$ as well as $x_{4}^{uv}$ are in $S$.
	Further, as $x_2^{uv}$ is not a cutvertex in $H$ we have $x_2^{uv}\in S$ or $x_3^{uv}\in S$.
	By \eqref{eq:geq2} it follows that $|S\cap V(H_{uv})|\geq 5$.
	Replacing the elements in $S\cap V(H_{uv})$ by the elements in the set $\{ u, x_2^{uv}, x_4^{uv}, y_1^{uv},z_1^{uv} \}$ yields an \SDset of no larger cardinality which contains $v$.
	Thus we may assume that for every $uv\in E(G)$ we have $u\in S$ or $v\in S$.
	Therefore, the set $C=S\cap V(G)$ is a vertex cover and by \eqref{eq:geq4} $|C|\leq k$.
\end{proof}

Theorem~\ref{thm:sds_np_perfect_graphs} demonstrates that \sdsprob and \VCprob differ in their complexity on some graph classes, unless $\Pp\neq\NP$.
However, in the following we show that the gap between a minimum size \sdset and a minimum size vertex cover cannot be too large.
In particular, we demonstrate that a minimum size vertex cover may be at most twice as large as a minimum size \SDset.
For an integer $k\in \mathbb{N}$ let $G$ be a graph on $3k$ vertices, where $k$ vertices form a clique and each vertex of the clique has a dangling path of length two attached to it, \cf~Figure~\ref{tikz::example_sds_vs_vc_VC}.
\begin{figure}[ht]
	
	\hfill
	\begin{minipage}[t]{.47\textwidth}%
		\centering
		\begin{tikzpicture}
			\draw[inner sep=1pt, minimum size = 10pt]
			(0,1) node[circle,draw] (v1) {}
			(0.71,0.71) node[circle,draw,fill=\colorone!60] (v2) {}
			(1,0) node[circle,draw,fill=\colorone!60] (v3) {}
			(0.71,-0.71) node[circle,draw,fill=\colorone!60] (v4) {}
			(0,-1) node[circle,draw,fill=\colorone!60] (v5) {}
			(-0.71,-0.71) node[circle,draw,fill=\colorone!60] (v6) {}
			(-1,0) node[circle] (v7) {$\vdots$}
			(-0.71,0.71) node[circle,draw,fill=\colorone!60] (v8) {}
			
			(0,1.75) node[circle,draw,fill=\colorone!60] (u1) {}
			(1.24,1.24) node[circle,draw,fill=\colorone!60] (u2) {}
			(1.75,0) node[circle,draw,fill=\colorone!60] (u3) {}
			(1.24,-1.24) node[circle,draw,fill=\colorone!60] (u4) {}
			(0,-1.75) node[circle,draw,fill=\colorone!60] (u5) {}
			(-1.24,-1.24) node[circle,draw,fill=\colorone!60] (u6) {}
			(-1.24,1.24) node[circle,draw,fill=\colorone!60] (u8) {}
			
			(0,2.5) node[circle,draw] (w1) {}
			(1.78,1.78) node[circle,draw] (w2) {}
			(2.5,0) node[circle,draw] (w3) {}
			(1.78,-1.78) node[circle,draw] (w4) {}
			(0,-2.5) node[circle,draw] (w5) {}
			(-1.78,-1.78) node[circle,draw] (w6) {}
			(-1.78,1.78) node[circle,draw] (w8) {}
			;

			\draw[-, thick]
			(v1) edge (v2)
			(v1) edge (v3)
			(v1) edge (v4)
			(v1) edge (v5)
			(v1) edge (v6)
			(v1) edge (v8)
			(v2) edge (v3)
			(v2) edge (v4)
			(v2) edge (v5)
			(v2) edge (v6)
			(v2) edge (v8)
			(v3) edge (v4)
			(v3) edge (v5)
			(v3) edge (v6)
			(v3) edge (v8)
			(v4) edge (v5)
			(v4) edge (v6)
			(v4) edge (v8)
			(v5) edge (v6)
			(v5) edge (v8)
			(v6) edge (v8)
			(v7) edge (v1)
			(v7) edge (v2)
			(v7) edge (v3)
			(v7) edge (v4)
			(v7) edge (v5)
			(v7) edge (v6)
			(v7) edge (v8)
			
			(v1) edge (u1)
			(v2) edge (u2)
			(v3) edge (u3)
			(v4) edge (u4)
			(v5) edge (u5)
			(v6) edge (u6)
			(v8) edge (u8)
			
			(w1) edge (u1)
			(w2) edge (u2)
			(w3) edge (u3)
			(w4) edge (u4)
			(w5) edge (u5)
			(w6) edge (u6)
			(w8) edge (u8)
			;
		\end{tikzpicture}
		\caption{Vertex cover of minimum size in $G$}
		\label{tikz::example_sds_vs_vc_VC}
	\end{minipage}
	\hfill
	\begin{minipage}[t]{.47\textwidth}%
		\centering
		\begin{tikzpicture}
			\draw[inner sep=1pt, minimum size = 10pt]
			(0,1) node[circle,draw] (v1) {}
			(0.71,0.71) node[circle,draw] (v2) {}
			(1,0) node[circle,draw] (v3) {}
			(0.71,-0.71) node[circle,draw] (v4) {}
			(0,-1) node[circle,draw] (v5) {}
			(-0.71,-0.71) node[circle,draw] (v6) {}
			(-1,0) node[circle] (v7) {$\vdots$}
			(-0.71,0.71) node[circle,draw] (v8) {}
			
			(0,1.75) node[circle,draw,fill=\colorone!60] (u1) {}
			(1.24,1.24) node[circle,draw,fill=\colorone!60] (u2) {}
			(1.75,0) node[circle,draw,fill=\colorone!60] (u3) {}
			(1.24,-1.24) node[circle,draw,fill=\colorone!60] (u4) {}
			(0,-1.75) node[circle,draw,fill=\colorone!60] (u5) {}
			(-1.24,-1.24) node[circle,draw,fill=\colorone!60] (u6) {}
			(-1.24,1.24) node[circle,draw,fill=\colorone!60] (u8) {}
			
			(0,2.5) node[circle,draw] (w1) {}
			(1.78,1.78) node[circle,draw] (w2) {}
			(2.5,0) node[circle,draw] (w3) {}
			(1.78,-1.78) node[circle,draw] (w4) {}
			(0,-2.5) node[circle,draw] (w5) {}
			(-1.78,-1.78) node[circle,draw] (w6) {}
			(-1.78,1.78) node[circle,draw] (w8) {}
			;

			\draw[-, thick]
			(v1) edge (v2)
			(v1) edge (v3)
			(v1) edge (v4)
			(v1) edge (v5)
			(v1) edge (v6)
			(v1) edge (v8)
			(v2) edge (v3)
			(v2) edge (v4)
			(v2) edge (v5)
			(v2) edge (v6)
			(v2) edge (v8)
			(v3) edge (v4)
			(v3) edge (v5)
			(v3) edge (v6)
			(v3) edge (v8)
			(v4) edge (v5)
			(v4) edge (v6)
			(v4) edge (v8)
			(v5) edge (v6)
			(v5) edge (v8)
			(v6) edge (v8)
			(v7) edge (v1)
			(v7) edge (v2)
			(v7) edge (v3)
			(v7) edge (v4)
			(v7) edge (v5)
			(v7) edge (v6)
			(v7) edge (v8)
			
			(v1) edge (u1)
			(v2) edge (u2)
			(v3) edge (u3)
			(v4) edge (u4)
			(v5) edge (u5)
			(v6) edge (u6)
			(v8) edge (u8)
			
			(w1) edge (u1)
			(w2) edge (u2)
			(w3) edge (u3)
			(w4) edge (u4)
			(w5) edge (u5)
			(w6) edge (u6)
			(w8) edge (u8)
			;
		\end{tikzpicture}
		\caption{\SDset of minimum size in~$G$}
		\label{tikz::example_sds_vs_vc_SDS}
	\end{minipage}
	\hfill
\end{figure}
A vertex cover in $G$ contains at least $k-1$ vertices of the clique as well as one more for each dangling path.
Hence, a vertex cover of $G$ has size at least~$2k-1$.
See Figure \ref{tikz::example_sds_vs_vc_VC} for a possible minimum size vertex cover.

On the other hand there is an \SDset of size $k$ since the edges of every dangling path are contained in a spanning tree of $G$, \cf~Figure~\ref{tikz::example_sds_vs_vc_SDS}.
As for every dangling path at least one of its vertices has to be contained in an \sdset the described \SDset is of minimum size.
Now we show that this is the largest gap possible.
\begin{theorem}\label{sds_spts:all_spanning_trees:thm:2timesVC}
	Let $G$ be a connected graph and $S$ an \SDset.
	We can extend $S$ to a vertex cover C by adding at most $|S|-1$ vertices.
	In particular, if $C'$ is a minimum size vertex cover and $S'$ is a minimum size \SDset, then it holds that $|C'|\leq 2|S'|-1$. The given bound is tight.
\end{theorem}
\begin{proof}
	Let $G$ be a connected graph and denote the block graph of $G$ by $T$.
	Further, let $S$ be an \SDset.
	By definition there is at least one vertex in $S$ and thus, there is a block $B$ such that $S\cap V(B)\neq\emptyset$.
	We root $T$ at such a block $B_r$.
	Denote by $\CV$ the set of cutvertices of~$G$.
	For every cutvertex with children $B_1,\ldots,B_k$ in $T$ let 
	\begin{align*}
		S(v)=S\cap \left( \bigcup_{i=1}^k N_{B_i}(v) \right),
	\end{align*}
	\ie, $S(v)$ consists of the neighbors of $v$ that are in $S$ and in the blocks that are children of $v$ in $T$.
	We claim that $C$ is a vertex cover of size at most $2|S|-1$, where
	\begin{align*}
		C\coloneqq S\cup\{ v\in \CV\colon S(v)\neq \emptyset \}.
	\end{align*}
	An example for such a set $C$ is illustrated in Figure~\ref{tikz:constrof_S(v)}.
	
	\begin{figure}[htb]
		\centering
		\begin{minipage}{.45\textwidth}
			\centering
			\begin{tikzpicture}
				\draw[inner sep=1pt, minimum size = 20pt]
				(0,0) node[draw,circle,fill=\colortwo!60] (v) {\scriptsize$v_1$}
				;
				\draw[rectangle,minimum size=80,rounded corners=25pt]
				(-1.5,-3) node[draw,fill=black!10] (b1) {}
				(1.5,-3) node[draw,fill=black!10] (b2) {}
				;
				\draw[-]
				(v) edge ([shift={(0,40pt)}]b1.center)
				(v) edge ([shift={(0,40pt)}]b2.center)
				;
				\draw[inner sep=1pt, minimum size = 15pt,circle]
				(-1.5,-2.3) node[draw,fill=\colortwo!60] (v1) {\scriptsize$v_1$}
				(-2.4,-2.735) node[draw,fill=\colorone!60] (u1) {\scriptsize$u_1$}
				(-1.5,-3.17) node[draw,fill=white] (u2) {\scriptsize$u_2$}
				(-0.6,-2.735) node[draw,fill=\colorone!60] (u3) {\scriptsize$u_3$}
				;
				\draw[dashed,color=black]
				(v1) edge (u1)
				(u1) edge (u2)
				(u2) edge (u3)
				(u3) edge (v1)
				;
				\draw
				(-2.3,-3.95) node (b11) {\large$B^{v_1}_1$}
				(.7,-3.95) node (b21) {\large$B^{v_1}_2$}
				;
				\draw[inner sep=1pt, minimum size = 15pt,circle]
				(1.5,-2.3) node[draw,fill=\colortwo!60] (v2) {\scriptsize$v_1$}
				(1.5,-3.17) node[draw,fill=\colortwo!60] (u4) {\scriptsize$u_4$}
				;
				\draw[dashed,color=black]
				(v2) edge (u4)
				;
				\draw[dotted,color=black,thick]
				(v) edge ([shift={(0,23pt)}]v) 
				(b1) edge ([shift={(-1,-52pt)}]b1) 
				(b1) edge ([shift={(1,-52pt)}]b1)
				(b2) edge ([shift={(0,-52pt)}]b2) 
				;
			\end{tikzpicture}
		\end{minipage}
		\begin{minipage}{.45\textwidth}
			\centering
			\begin{tikzpicture}
				\draw[inner sep=1pt, minimum size = 20pt]
				(0,0) node[draw,circle] (v) {\scriptsize$v_2$}
				;
				\draw[rectangle,minimum size=80,rounded corners=25pt]
				(-1.5,-3) node[draw,fill=black!10] (b1) {}
				(1.5,-3) node[draw,fill=black!10] (b2) {}
				;
				\draw[-]
				(v) edge ([shift={(0,40pt)}]b1.center)
				(v) edge ([shift={(0,40pt)}]b2.center)
				;
				\draw[inner sep=1pt, minimum size = 15pt,circle]
				(1.5,-2.3) node[draw,fill=white] (v1) {\scriptsize$v_2$}
				(2,-3.17) node[draw,fill=\colortwo!60] (u1) {\scriptsize$w_3$}
				(1,-3.17) node[draw,fill=\colortwo!60] (u2) {\scriptsize$w_2$}
				;
				\draw[dashed,color=black]
				(v1) edge (u1)
				(v1) edge (u2)
				(u1) edge (u2)
				;
				\draw
				(-2.3,-3.95) node (b11) {\large$B^{v_2}_1$}
				(.7,-3.95) node (b21) {\large$B^{v_2}_2$}
				;
				\draw[inner sep=1pt, minimum size = 15pt,circle]
				(-1.5,-2.3) node[draw,fill=white] (v2) {\scriptsize$v_2$}
				(-1.5,-3.17) node[draw,fill=\colortwo!60] (u3) {\scriptsize$w_1$}
				;
				\draw[dashed,color=black]
				(v2) edge (u3)
				;
				\draw[dotted,color=black,thick]
				(v) edge ([shift={(0,23pt)}]v) 
				(b2) edge ([shift={(-1,-52pt)}]b2) 
				(b2) edge ([shift={(1,-52pt)}]b2) 
				(b1) edge ([shift={(0,-52pt)}]b1) 
				;
			\end{tikzpicture}
		\end{minipage}
		\caption{Part of the block graph of a graph $G$: One can observe the cutvertices $v_1$ and $v_2$ together with their children.
			The solid and dotted edges are edges in the block graph.
			The dashed edges are edges in the graph $G$.
			The \coloronetext vertices are in $S$ and the \colortwotext vertices are in $C\setminus S$.}
		\label{tikz:constrof_S(v)}
	\end{figure}
	
	First we prove that $C$ is in fact a vertex cover.
	To do so, we show for every vertex $v\in V(G)\setminus C$ that $N_G(v)\subseteq C$.
	First of all, if $v$ is not a cutvertex, then all its neighbors in $G$ are in $S$ by condition \ref{sdscond_i} of Theorem \ref{sdscond} and thus in $C$.
	Next assume that $v$ is a cutvertex with children $B_1,\ldots,B_k$ and parent $B$ in $T$.
	Since $v\notin C$ we have $S(v)=\emptyset$ and thus no neighbor of $v$ in the children $B_1,\ldots,B_k$ is in $S$.
	Condition~\ref{sdscond_ii} of Theorem~\ref{sdscond} implies that all neighbors of $v$ in $B$ are in~$S$ and hence in $C$.
	Now consider a vertex $w\in N(v)\cap B_i$ for $i\in \{1,\ldots,k\}$.
	If we show that $w\in C$, then the claim follows.
	Since neither $v$ nor $w$ is in $S$ the vertex $w$ needs to be a cutvertex, otherwise $w$ would not satisfy condition~\ref{sdscond_i} of Theorem~\ref{sdscond}.
	Further, the block $B_i$ is the parent of $w$ in $T$ and since $v\notin S$ we have that $w$ is \simdomed in a block that is its child in $T$.
	In particular, we have $S(w)\neq \emptyset$ and by the definition of $C$ it is $w\in C$.
	Overall, this shows that $N_G(v)\subseteq C$ and hence $C$ is a vertex cover.
	
	
	Next we show $|C|\leq 2|S|-1$.
	This follows if we can find an injective mapping from $C\setminus S$ to $S\setminus V(B_r)$ since $|S\cap V(B_r)|\geq 1$.
	By the definition of $C$ there is for every $v\in C\setminus S$ a vertex $w\in S(v)\subseteq S\setminus V(B_r)$.
	If we map $v$ to $w$, then we obtain an injective mapping:
	If $w$ is not a cutvertex, then $w$ is contained in exactly one block and $v$ is the unique parent of this block.
	Otherwise, if $w$ is a cutvertex, then it is itself a child of the block containing $v$ and $w$ in $T$.
	Hence, every block containing $w$ is either a child of $v$ or a child of $w$ and since $w\in S$ no other vertex in $C\setminus S$ is mapped to $w$.
	This shows that the mapping is injective and hence $|C|\leq 2|S|-1$.
	
	We already observed before the statement of this theorem, that there exists a graph $G$ with minimum vertex cover $C$ and minimum \SDset~$S$ such that $|C|=2|S|-1$, \cf~Figure~\ref{tikz:constrof_S(v)}. Thus, the provided bound is in fact tight.
\end{proof}

\section{An Exact Algorithm for \sdsprob using an Oracle for \VCprob}
\label{sec::sds_alg}

In the previous section we saw that on $2$-connected graphs \sdsprob is equivalent to \VCprob.
However, we have also highlighted some differences.
On the one hand we showed that in general graphs the size of a minimum size \SDset and a minimum size vertex cover may differ by a factor of two.
On the other hand we proved that \sdsprob is \NP-complete when restricted to perfect graphs whereas \VCprob is solvable in polynomial time.
In this section we concentrate on the algorithmic aspect of \sdsprob.
In particular, we show how we can find a minimum size \SDset in general graphs using an oracle for a minimum size vertex cover.
To this end we need some further notation.

In the following we assign colors to vertices.
To get an intuition what these colors represent for a vertex $v$ we give an interpretation of them:
\begin{itemize}
	\setlength{\itemsep}{1pt}
	\setlength{\parskip}{0pt}
	\setlength{\parsep}{0pt}
	\item color $\instate$ indicates that $v$ is fixed to be in the \SDset,
	\item color $\domstate$ indicates that $v$ is not in the \SDset yet but it is \simdomed and
	\item color $\notdomstate$ indicates that $v$ is not in the \SDset and it is not \simdomed yet.
\end{itemize}
We call color $\instate$ \emph{better} than color $\domstate$ and $\notdomstate$ and say that color $\domstate$ is \emph{better} than color $\notdomstate$.
For a subset $\operatorname{col}\subseteq \{\instate,\domstate,\notdomstate\}$ we denote the best color of $\operatorname{col}$ by $\operatorname{best}(\operatorname{col})$.

Now we briefly describe the idea of the algorithm:
\begin{idea}\label{idea}
	Let $G$ be a graph in which we want to compute a minimum size \SDset.
	Our algorithm is based on the structure of the block graph of $G$.
	We start with an endpoint $H$ and its connection vertex $v$ of the graph $G$.
	We take out of all minimum size sets $S\subseteq V(H)$ that \simdom all vertices in $V(H)\setminus \{v\}$ one with the \emph{best coverage} for $v$, \ie, the best color for $v$.
	We then remove $H-v$ from~$G$ and continue with the next endpoint.
	
	In later stages of the algorithm we may have vertices in our endpoint, that are already \simdomed or even contained in an \SDset for \emph{free}.
	This has to be taken into account when computing such a minimum size set of a block that was originally not an endpoint.
	The crucial point of this procedure is, that any vertex can be \simdomed by adding only one vertex, namely the vertex itself.
	Thus, if the connection vertex $v$ is not \simdomed in one of its endpoints by any of the minimum size \SDsets, then we can simply \simdom it later on in a subsequent step of the algorithm.
	This is true, as we can be sure that it never costs us more than it would cost us to \simdom it within the current block.
\end{idea}

To formalize this setup where some vertices are already \simdomed or even in the \SDset we need a generalized version of \sdsets.
\begin{definition}\label{def:colresp}
	Let $G$ be a connected graph and $f\colon V(G)\rightarrow \{\instate,\domstate,\notdomstate\}$ a coloring.
	We call a subset $S\subseteq V(G)$ an \emph{$f$-\rsds} if the following conditions hold:
	\begin{itemize}
		\item $f^{-1}(\instate)\subseteq S$ and
		\item $f^{-1}(\notdomstate)$ is \simdomed by $S$.
	\end{itemize}
	If we do not specify the coloring, then we also use the term \emph{color \rsds}.
\end{definition}
Thus, a color \rSDS $S$ is an \SDset such that all vertices with assigned color~$\instate$ are contained in $S$ and all vertices with assigned color~$\domstate$ do not have to be \simdomed by $S$.
Clearly this is a generalization of an \SDset as if all vertices are assigned color~$\notdomstate$, then a color \rSDS and an \SDset are the same thing.

In the following we present an algorithm that computes a minimum size color \rSDS in a connected graph without a cutvertex and afterwards, we show how we can use this algorithm to obtain an \SDset in general graph.
To this end let $G$ be a connected graph without a cutvertex and $f\colon V(G)\rightarrow \{\instate,\domstate,\notdomstate\}$ a coloring of the vertices of $G$.
Algorithm \ref{alg::SDS_gen} describes how to find a minimum size $f$-\rSDS in $G$ using an oracle \textsc{MinVertexCover} for solving the well known \minVCprob as a black box algorithm.
Before we use the oracle to obtain such a vertex cover, we modify the graph.
Recall that all vertices with color $\instate$ have to be in $S$ and therefore we remove them from $G$.
The vertices with color $\domstate$ do not have to be simultaneously dominated and hence we remove the edges between vertices with color $\domstate$.
In Theorem~\ref{thm::second_algo} we prove that a minimum size vertex cover in the modified graph is in fact a $f$-\rSDS in the original graph.
\begin{figure*}[ht]
	\begin{algorithm}[H]
		\caption{$\textsc{crSDS}(G, f)$: Finding a minimum size color \rsds in a connected graph $G$ without a cutvertex}\label{alg::SDS_gen}
		\KwIn{A connected graph $G$ without a cutvertex and a colouring $f\colon V(G)\to\{\instate, \domstate, \notdomstate\}$}
		\KwOut{A minimum $f$-\rsds and its size}
		$G=G-f^{-1}(\instate)$
		
		$G=G- E(G[f^{-1}(\domstate)])$
		
		$S=\textsc{MinVertexCover}(G)$ \label{line_alg:linevcover}
		
		\Return $S\cup f^{-1}(\instate)$, $|S\cup f^{-1}(\instate)|$
	\end{algorithm}
\end{figure*}
\begin{samepage}
	\begin{theorem}\label{thm::second_algo}
		Given a connected graph $G$ without a cutvertex and a coloring $f\colon V(G)\to\{\instate,\domstate,\notdomstate\}$ Algorithm~\ref{alg::SDS_gen} returns a minimum size $f$-\rsds.
		It can be implemented to run in polynomial time if {\normalfont\textsc{MinVertexCover}} can be implemented to run in polynomial time.
	\end{theorem}
\end{samepage}
Before we start with the proof note that this running time is also called \emph{oracle polynomial} given the oracle \textsc{MinVertexCover}.
As oracle algorithms are not our focus and we just use it here we do not formally introduce this form of algorithms.
\begin{proof}
	Let $S^\star$ be the set returned by the algorithm.
	We begin by proving that $S^\star$ is an $f$-\rsds.
	Clearly $f^{-1}(\instate)\subseteq S^\star$ and thus, as $G$ does not contain a cutvertex and by Definition~\ref{def:colresp} we only need to prove that for all vertices~$v$ with $f(v)=\notdomstate$, we have $v\in S^\star$ or $N_G(v)\subseteq S^\star$.
	So let $v\in V(G)$ with $f(v)=\notdomstate$.
	After having deleted all vertices with color $\instate$ we do not delete edges incident to $v$.
	Thus, the vertex cover computed either contains $v$ itself or all neighbors of $v$ which do not have color $\instate$. 
	As all deleted vertices are contained in $S^\star$ the required condition follows and we conclude that $S^\star$ is indeed an $f$-\rsds.
	
	Let $G^\prime=(G-f^{-1}(\instate))-E(G[f^{-1}(\domstate)])$.
	To see that the algorithm returns a minimum size $f$-\rsds we show that for every $f$-\rsds $S$ it holds that $S\setminus f^{-1}(\instate)$ is a vertex cover in $G^\prime$.
	So let~$S$ be any $f$-\rsds and let $e=uv\in E(G^\prime)$.
	Then at least one endpoint of $e$, say $v$, has color $\notdomstate$ and neither $u$ nor $v$ has color~$\instate$.
	By Definition~\ref{def:colresp} this means that~$v$ or all vertices in $N_G(v)$ are contained in $S$.
	We have $u\in N_{G^\prime(v)}$ and thereby~$u\in S\setminus f^{-1}(\instate)$ or $v\in S\setminus f^{-1}(\instate)$. 
	As $e$ was an arbitrary edge in $E(G^\prime)$ we conclude that $S\setminus f^{-1}(\instate)$ is a vertex cover in $G^\prime$.
	
	Clearly all steps of the algorithm, except possibly the call to \textsc{MinVertexCover} can be implemented to run in polynomial time.
\end{proof}

Now that we know how to find a color \rSDS on connected graphs without a cutvertex we focus on minimum size \SDsets in a general graph $G$.
As already described in Explanation~\ref{idea} we make use of the tree structure of the block graph of $G$.
In particular, we do not consider the whole graph $G$ at once but successively work with endpoints of the block graph and their connection vertex.
Since we have to make some adjustments to the used coloring during the algorithm we need further notation to make the arguments more clear and formally correct.

\begin{definition}
	Let $G$ be a graph and $f\colon V(G)\to\{\instate,\domstate,\notdomstate\}$	some coloring of the vertices.
	For any induced subgraph $H$ of $G$ we denote by $f^H$ the coloring $f$ restricted to the nodes of $H$.
	Further, for any fixed vertex $v\in V(H)$ and $i\in\{\instate,\domstate,\notdomstate\}$ we denote by~$f^H_{v=i}$ the coloring of $V(H)$ with $f^H_{v=i}(v)=i$ and~${f^H_{v=i}(w)=f^H(w)}$ for all $w\in V(H)\setminus\{v\}$.
	Finally, we denote by $S^H_{v=i}$ a minimum~${f^H_{v=i}}$-\rsds in $H$ for $i\in\{\instate,\domstate,\notdomstate\}$.
	If $H=G$, then we omit the superscript $H$ in the notation.
\end{definition}

Algorithm \ref{alg::SDS_STF} shows a pseudo code version of the complete procedure.
Within the algorithm we use the algorithm \textsc{crSDS} and the \emph{black box algorithm} \textsc{GetEndPoint}.
The latter one takes as input a graph $G$ that is not $2$-connected and returns an endpoint~$B$ of the block graph of $G$ and the parent $v\in B$ of the endpoint $B$ in the block graph.
Note that if a vertex in $V(B)$ is \simdomed in $B$, then this vertex is \simdomed in $G$ by Theorem~\ref{sdscond}.
Therefore, a color \rsds in $B$ suffices to ensure that every vertex in $V(B)\setminus \{v\}$ with color $\notdomstate$ is \simdomed in $G$.
We save the current color of $v$ and compute a color \rsds in $B$ for every possible color of $v$.
We use the color \rsds in $B$, which is the smallest among the three possibilities, where ties are broken by the best coverage of~$v$.
Afterwards, we delete $B-v$ from $G$ and continue with the remaining graph.
Before we formally prove the correctness of Algorithm~\ref{alg::SDS_STF} and discuss its running time, we prove two lemmas, which make life easier in the proof of the algorithm.

\begin{figure*}[ht]
	\begin{algorithm}[H]
		\caption{Computing a color respecting \SDset of minimum size}\label{alg::SDS_STF}
		\KwIn{A connected graph $G$ and a coloring $f\colon V(G)\to\{\instate, \domstate, \notdomstate\}$}
		\KwOut{An $f$-\rSDS of minimum size in $G$}	
		$S=\emptyset$
		
		\While{$G$ contains a cutvertex}{
			$B, v=\textsc{getEndPoint}(G)$
			
			$c^\star=f(v)$
			
			\For{$c\in\{\instate, \domstate, \notdomstate\}$}{
				$f(v)=c$
				
				$S_c, \#_c=\textsc{crSDS}(B, f^{B})$\label{alg:subproblemcomp}
			}
			$f(v)=c^\star$
			
			\If{$\#_\instate=\#_{\notdomstate}=\#_{\domstate}$}{
				$f(v)=\instate$\label{alg:difcol1}
				
				$S=S\cup S_{\instate}$
			}
			\ElseIf{$\#_\instate>\#_{\notdomstate}=\#_\domstate$}{
				$f(v)=\best({c^\star, \domstate})$\label{alg:difcol2}
				
				$S=S\cup S_{\notdomstate}$
				
			}
			\Else{
				$S=S\cup S_{\domstate}$
			}
			$G= G-\left(V(B)\setminus\{v\}\right)$
		}
		$S_G, \# = \textsc{crSDS}(G, f)$\label{alg:linek1}
		
		\Return $S\cup S_G$
		
	\end{algorithm}
\end{figure*}

Before we formally prove the correctness of Algorithm~\ref{alg::SDS_STF} and discuss its running time, we prove two lemmas, which make life easier in the proof of the algorithm.
Lemma \ref{lem:maxone} shows that if we only change one color in a coloring of a connected graph~$G$ without a cutvertex, then the size of a minimum color \rSDS changes at most by one.
\begin{lemma}\label{lem:maxone}
	Let $G$ be a connected graph without a cutvertex, $v\in V(G)$ a fixed vertex in $G$ and ${f\colon V(G)\to\{\instate,\domstate,\notdomstate\}}$ some coloring. 
	Then the following two statements hold:
	\begin{samepage}
		\begin{thmenum}
			\item $|S_{v=\domstate}|\leq|S_{v={\notdomstate}}|\leq |S_{v=\instate}|$.\label{lem:maxone1}
			\item $|S_{v=\instate}|-|S_{v=\domstate}|\leq 1$.\label{lem:maxone2}
		\end{thmenum}
	\end{samepage}
\end{lemma}
\begin{proof}
	First we show that every $f_{v=\notdomstate}$-\rSDS $S$ is also $f_{v=\domstate}$-respecting. 
	Clearly we have~$f_{v=\domstate}^{-1}(\instate)=f_{v=\notdomstate}^{-1}(\instate)\subseteq S$.
	Further ${f_{v=\domstate}^{-1}(\notdomstate)\subseteq f^{-1}_{v=\notdomstate}(\notdomstate)}$.
	Thus, $f^{-1}_{v=\domstate}(\notdomstate)$ is \simdomed by $S$ and $S$ is also $f_{v=\domstate}$-respecting.
	With similar arguments we get that any $f_{v=\instate}$-\rSDS is also $f_{v=\notdomstate}$-respecting.
	These two small observations directly imply \ref{lem:maxone1}.
	
	To see that \ref{lem:maxone2} holds, let $S_{v=\domstate}$ be a minimum $f_{v=\domstate}$-\rSDS. Then~$S_{v=\domstate}\cup\{v\}$ is $f_{v=\instate}$-respecting, as 
	\begin{align*}
		f_{v=\instate}^{-1}(\instate)=f_{v=\domstate}^{-1}(\instate)\cup\{v\}\subseteq S_{v=\domstate}\cup\{v\}
	\end{align*}
	and $f_{v=\instate}^{-1}(\notdomstate)\subseteq f_{v=\domstate}^{-1}(\notdomstate)$.
	This already implies that the minimum $f_{v=\instate}$-\rSDS has at most one element more than $S_{v=\domstate}$.
\end{proof}
The next lemma justifies how the algorithm combines such a color \rSDS of an endpoint and one of the rest of the graph to obtain a color \rSDS for the whole graph $G$.
For better readability we abuse notation in Lemma~\ref{puzzletogether} and Theorem~\ref{thm::main_algo} and write $G-H$ instead of $G-V(H)$.
As we only use this in these two statements we do not introduce this notation formally.
\begin{lemma}\label{puzzletogether}
	Let $G$ be a graph with some coloring $f\colon V(G)\to\{\instate, \domstate, \notdomstate\}$ and~$H'$ be some endpoint of $G$ with connection vertex $v\in V(G)$ and let $H=H'-v$.
	Then the following three statements hold true:
	\begin{itemize}
		\item If $|S^{H^\prime}_{v=\domstate}|=|S^{H^\prime}_{v=\notdomstate}|=|S^{H^\prime}_{v=\instate}|$, then $S^{H^\prime}_{v=\instate}\cup S^{G-H}_{v=\instate}$ is a minimum $f$-\rsds in $G$.
		\item If $|S^{H^\prime}_{v=\domstate}|<|S^{H^\prime}_{v=\notdomstate}|=|S^{H^\prime}_{v=\instate}|$, then $S^{H^\prime}_{v=\domstate}\cup S^{G-H}_{v=f(v)}$ is a minimum $f$-\rsds in $G$.
		\item If $|S^{H^\prime}_{v=\domstate}|=|S^{H^\prime}_{v=\notdomstate}|<|S^{H^\prime}_{v=\instate}|$, then $S^{H^\prime}_{v=\notdomstate}\cup S^{G-H}_{v=\operatorname{best}(\{f(v), \domstate\})}$ is a minimum $f$-\rsds in $G$.
	\end{itemize}
\end{lemma}
\begin{proof}
	It is easy to see that all claimed sets are $f$-\rsdss in $G$, we now focus on their minimality.
	To this end let $S$ be a minimum $f$-\rsds in $G$.
	
	We begin with the case that $|S^{H^\prime}_{v=\domstate}|=|S^{H^\prime}_{v=\notdomstate}| =|S^{H^\prime}_{v=\instate}|$.
	If $v\in S$, then the first statement holds as $S$ is minimum restricted to $H^\prime$ and therefore it is for free in $S$ when considering~$G-H$.
	So assume~$v\notin S$ and regard $S\cap V(H^\prime)$.
	This set \simdoms all vertices in $H^\prime$ with respect to $f$ except possibly~$v$.
	As $S^{H^\prime}_{v=\domstate}$ is minimum among these sets we have $|S\cap V(H^\prime)|\geq |S_{v=\domstate}^{H^\prime}|= |S_{v=\instate}^{H^\prime}|$ and we can replace $S\cap V(H^\prime)$ by $S_{v=\instate}^{H^\prime}$ without making it larger.
	We now have a minimum $f$-\rsds in $H'$ containing~$v$ and get that $S^{H^\prime}_{v=\instate}\cup S^{G-H}_{v=\instate}$ is also \simdoming with respect to $f$.
	
	Next assume that $|S^{H^\prime}_{v=\domstate}|<|S^{H^\prime}_{v=\notdomstate}|=|S^{H^\prime}_{v=\instate}|$.
	If $v$ is not \simdomed by $S\cap V(H^\prime)$ we are done, so assume $v$ is \simdomed by $S\cap V(H^\prime)$ and hence $|S\cap V(H^\prime)| > |S^{H^\prime}_{v=\domstate}|$.
	If $v\in S$ by Lemma~\ref{lem:maxone1} we have $|S\cap V(G-H)| \geq|S^{G-H}_{v=\instate}|\geq |S^{G-H}_{v=f(v)}|$ and hence,
	\begin{align*}
		|S|= |S\cap V(H^\prime)| + |S\cap V(G-H)| - 1 > |S^{H^\prime}_{v=\domstate}| + |S^{G-H}_{v=f(v)}| - 1.
	\end{align*}
	If $v\notin S$ Lemma~\ref{lem:maxone} implies $|S\cap V(G-H)|\geq |S_{v=\domstate}^{G-H}| \geq |S^{G-H}_{v=f(v)}| - 1$ and we get
	\begin{align*}
		|S|= |S\cap V(H^\prime)| + |S\cap V(G-H)|> |S^{H^\prime}_{v=\domstate}| + |S^{G-H}_{v=f(v)}| - 1.
	\end{align*}
	Both cases then imply $|S|\geq |S^{H^\prime}_{v=\domstate}| + |S^{G-H}_{v=f(v)}|$.
	
	Finally, assume $|S^{H^\prime}_{v=\domstate}|=|S^{H^\prime}_{v=\notdomstate}|<|S^{H^\prime}_{v=\instate}|$.
	First note that this implies~$v\notin S^{H^\prime}_{v=\notdomstate}$.
	Let the set~${S^\prime=(S\setminus V(H)) \cup S^{H^\prime}_{v=\notdomstate}}$.
	Then $|S^\prime|\leq |S|$ and $S^\prime$ is still \simdoming with respect to~$f$.
	Furthermore, it holds that~${|S^\prime\setminus V(H)|\geq S^{G-H}_{v=\operatorname{best}(\{f(v), \domstate\})}}$ and we get 
	\begin{align*}
		|S|\geq |S^\prime|=|S^\prime\setminus V(H)|+|S^\prime\cap V(H)|\geq 
		|S^{H^\prime}_{v=\notdomstate}|+|S^{G-H}_{v=\operatorname{best}(\{f(v), \domstate\})}|,
	\end{align*}
	which implies the desired result.
\end{proof}
Now we show that the algorithm works in fact as desired and argue about the running time.
\begin{theorem}\label{thm::main_algo}
	For a connected graph $G$ and a coloring $f\colon V(G)\to\{\instate,\domstate,\notdomstate\}$, Algorithm~\ref{alg::SDS_STF} correctly computes a minimum size $f$-\rsds $S$ in $G$.
	It can be implemented to run in polynomial time if {\normalfont\textsc{crSDS}} can be implemented to run in polynomial time.
\end{theorem}

Before we start with the proof note that this running time is also called \emph{oracle polynomial} given the oracle \textsc{crSDS}.
As oracle algorithms are not our focus and we just use it here we do not formally introduce this form of algorithms.

\begin{proof}
	
	The proof of correctness can be regarded as a direct consequence of Lemma~\ref{puzzletogether}.
	Nevertheless, we give a formal proof here for the sake of completeness.
	To this end, note that Algorithm~\ref{alg::SDS_STF} can be regarded as a recursive algorithm, where in each step one endpoint except its connection vertex is cut off the graph.
	We do induction on the number of blocks of $G$.
	If $G$ is connected and has no cutvertex the claim trivially holds.
	So let $H'$ be an endpoint of $G$ with connection vertex~$v$ and set~${H= H'-v}$. 
	In the algorithm we now compute~$S^{H'}_{v=i}$ for~$i\in\{\instate,\domstate,\notdomstate\}$.
	By Lemma~\ref{lem:maxone} the three case distinction made in the algorithm (concerning the sizes of these sets) are the only cases that may occur.
	The algorithm now handles the cases as follows:
	\begin{itemize}
		\item If $|S^{H^\prime}_{v=\domstate}|=|S^{H^\prime}_{v=\notdomstate}|=|S^{H^\prime}_{v=\instate}|$, then it adds $S^{H^\prime}_{v=\instate}$ to the current set and maps color $\instate$ to~$v$.
		Thus, by induction the algorithm returns $S^{H^\prime}_{v=\instate}\cup S^{G-H}_{v=\instate}$, which is a minimum $f$-\rsds in $G$ by Lemma~\ref{puzzletogether}.
		\item If $|S^{H^\prime}_{v=\domstate}|<|S^H_{v=\notdomstate}|=|S^{H^\prime}_{v=\instate}|$, then it adds $S^{H^\prime}_{v=\domstate}$ to the current set and leaves the color as it was.
		Thus, by induction the algorithm returns $S^{H^\prime}_{v=\domstate}\cup S^{G-H}_{v=f(v)}$, which is a minimum $f$-\rsds in $G$ by Lemma~\ref{puzzletogether}.
		\item If $|S^{H^\prime}_{v=\domstate}|=|S^{H^\prime}_{v=\notdomstate}|<|S^{H^\prime}_{v=\instate}|$, then it adds $S^{H^\prime}_{v=\notdomstate}$ to the current set and sets the color of $v$ to $\operatorname{best}(\{f(v), \domstate\})$.
		Thus, by induction the algorithm returns $S^{H^\prime}_{v=\notdomstate}\cup S^{G-H}_{v=\operatorname{best}(\{f(v), \domstate\})}$, which is a minimum $f$-\rsds in $G$ by Lemma~\ref{puzzletogether}.
	\end{itemize} 
	As we can see in all considered cases the algorithm correctly computes a minimum $f$-\rsds in $G$.
	
	Considering the running time of Algorithm~\ref{alg::SDS_STF}, note that we can find all blocks in linear time, \textit{cf.}~\cite{hopcroft:73}.
	With a small adjustment of the usual lowpoint algorithm by Hopcroft and Tarjan~\cite{hopcroft:73} we can get the blocks in an order such that each time we regard the next component it is an endpoint of the remaining graph.
	Doing this as a preprocessing step, each call to \textsc{GetEndPoint} takes constant time and the deletion of $H$ is done implicitly.
	In each iteration, besides the three calls to \textsc{crSDS} we only do steps that can be realized in polynomial time, thus, if \textsc{crSDS} can be implemented to run in polynomial time so can Algorithm~\ref{alg::SDS_STF}. 
\end{proof}

\section{\Sdsets on Special Graph Classes}\label{sec::polynomial}
In this section we focus on \sdsets on special graph classes.
In particular, we present some classes, where we can solve \sdsprob in polynomial time.
From Theorem \ref{thm::main_algo} and Theorem \ref{thm::second_algo} we get the following theorem:
\begin{theorem}\label{sds_all_special:thm:poly}
	Let $\mathcal{H}'$ be a class of graphs on which vertex cover is solvable in polynomial time.
	Further, let $\mathcal{H}$ be a class of graphs such that for all $G\in \mathcal{H}$ and subsets $U,W\subseteq V(G)$ with $U\cap W=\emptyset$ we have~${(G-U)-E(G[W])\in\mathcal{H}'}$.
	Then we can solve \sdsprob in polynomial time on graphs from~$\mathcal{H}$.\qed
\end{theorem}
\noindent We now regard some graph classes, where Theorem \ref{sds_all_special:thm:poly} is applicable:
\paragraph{Bipartite Graphs}
Recall that a graph $G$ is bipartite if its vertex set can be partitioned into two sets, such that no edge of $G$ is between vertices in the same set of the partition.
It is easy to see that bipartite graphs are hereditary, \ie, every induced subgraph is again bipartite.
Even if we delete edges in the graph it remains bipartite.
With the help of K\"onig's Theorem \cite{schrijver-book} and a maximum flow algorithm (for example the Hopcroft-Karp algorithm \cite{hopcroft:73}) we can compute a minimum size vertex cover for bipartite graphs in polynomial time.
By Theorem \ref{sds_all_special:thm:poly} Algorithm \ref{alg::SDS_STF} solves \sdsprob on bipartite graphs in polynomial time.
\paragraph{Graphs of Bounded Treewidth}
For a fixed $\kappa\in \mathbb{N}$ regard the class $\mathcal{H}$ of graphs of treewidth at most $\kappa$.
We can find a tree decomposition of graphs in $\mathcal{H}$ in linear time, \cf~\cite{DBLP:journals/tcs/Bodlaender98}.
Arnborg and Proskurowski showed in \cite{ARNBORG198911} that a vertex cover of minimum size can be computed for a graph of bounded treewidth and given tree decomposition in linear time.
As deleting vertices or edges does not increase the treewidth, by Theorem~\ref{sds_all_special:thm:poly} we can compute an \SDset of minimum size in polynomial time for graphs from~$\mathcal{H}$.

Wit bipartite graphs and graphs of bounded treewidth we saw two classes of graphs where $\mathcal{H}=\mathcal{H}'$ in Theorem~\ref{sds_all_special:thm:poly}.
Next we consider the class of graphs where this is not the case and therefore the proof that we can apply Theorem~\ref{sds_all_special:thm:poly} is a bit more involved.

\paragraph{Chordal Graphs}
Recall that a graph $G$ is chordal if any cycle of $G$ with length at least~$4$ contains a chord, \ie, an edge between non subsequent edges in $C$.
Also note that chordal graphs are perfect, \cf~\cite{Diestel17}.
Chordal graphs are hereditary but if we delete edges in a chordal graph, it is possible that the resulting graph is not chordal anymore.
However, with the help of the Strong Perfect Graph Theorem, \cf~Theorem~\ref{strongperfectgraphtheorem} or~\cite{chudnovsky:06}, we can show that the graph after the edge deletion done in Algorithm~\ref{alg::SDS_gen} is perfect.
In perfect graphs we can compute a minimum size vertex cover in polynomial time, \cf~\cite{schrijver-book}.
This leads to a polynomial-time algorithm for solving \sdsprob on chordal graphs by Theorem \ref{sds_all_special:thm:poly}.

It remains to show that for chordal graphs the graph obtained after the edge deletion is perfect.
To do so we use the Strong Perfect Graph Theorem, \cf~Theorem~\ref{strongperfectgraphtheorem} or~\cite{chudnovsky:06} and show that the obtained graph does not contain an odd hole nor an odd anti hole.

\begin{lemma}\label{lemma::chordal->perfect}
	Let $G$ be a chordal graph and $I\subseteq V(G)$. Let $G^{\prime}$ be the 
	graph obtained by deleting all edges between the vertices of $I$ in $G$, 
	\textit{i.e.} \begin{align*}G^{\prime}=G-E(G[I]).\end{align*} 
	Then~$G^{\prime}$ is perfect.
\end{lemma}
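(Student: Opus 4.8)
The plan is to apply the Strong perfect graph theorem and rule out odd holes and odd antiholes in $G^{\prime}$ separately, using one common reduction. Suppose a vertex set $W\subseteq V(G)$ induces a hole or an antihole in $G^{\prime}$ and put $A:=W\cap I$. Because $G^{\prime}$ is obtained from $G$ by deleting exactly the edges inside $I$, the set $A$ is \emph{independent} in $G^{\prime}[W]$, and every edge present in $G[W]$ but absent in $G^{\prime}[W]$ has both endpoints in $A$. Since $G[W]$ is an induced subgraph of the chordal graph $G$, it is itself chordal. So in both cases the task reduces to showing that one cannot turn the hole or antihole $G^{\prime}[W]$ into the chordal graph $G[W]$ by adding edges that live entirely inside the independent set $A$.

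I would dispose of the odd antihole first. Let $G^{\prime}[W]=\overline{C_\ell}$ with $\ell\ge 7$ odd; the case $\ell=5$ need not be treated here since $\overline{C_5}=C_5$ is an odd hole. An independent set of $\overline{C_\ell}$ is a clique of $C_\ell$, and as $C_\ell$ is triangle-free this forces $|A|\le 2$, the two vertices being consecutive on the hole when $|A|=2$. Hence $G[W]$ differs from $\overline{C_\ell}$ by at most one added edge. For every $\ell\ge 7$ the four vertices $\{s,s+1,s+3,s+4\}$ (indices modulo $\ell$) induce a $4$-cycle in $\overline{C_\ell}$ whose only non-edges are the two hole-consecutive pairs $\{s,s+1\}$ and $\{s+3,s+4\}$. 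As only one edge is added and there are $\ell\ge 7$ rotations to choose from, I can select $s$ so that the added edge equals neither of these pairs; the $4$-cycle then persists as an induced $C_4$ in $G[W]$, contradicting chordality.

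For the odd hole, write $G^{\prime}[W]=C_\ell$ with $\ell\ge 5$ odd, so that $G[W]$ is $C_\ell$ plus a set of chords all contained in the independent set $A$. The engine of the argument is a separate lemma: \emph{every Hamiltonian chordal graph admits a non-crossing (planar) triangulation of its Hamiltonian polygon using only its own edges}. Applied to $G[W]$ with Hamiltonian cycle $C_\ell$, this yields a non-crossing triangulation all of whose chords automatically lie in $A$. In any non-crossing triangulation of a polygon the number of triangles meeting a vertex $v$ equals the number of edges at $v$ minus one. A vertex $v\in W\setminus A$ carries no chords, hence lies in exactly one triangle, which must therefore be spanned by $v$ and its two cycle-neighbours $v^{-},v^{+}$; this forces the chord $v^{-}v^{+}$ to be present and hence $v^{-},v^{+}\in A$. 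Thus every vertex outside $A$ has both cycle-neighbours inside $A$, so $W\setminus A$ is independent in $C_\ell$ as well. Then $A$ and $W\setminus A$ are two independent sets partitioning $V(C_\ell)$, making $C_\ell$ bipartite and $\ell$ even, contradicting that $\ell$ is odd.

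The main obstacle is the triangulation lemma, which I would prove by induction on $\ell$ through ear-cutting. A chordal graph has a simplicial vertex $v$; its cycle-neighbours $v^{-},v^{+}$ lie in the clique $N(v)$, so $v^{-}v^{+}$ is an edge and, as $\ell\ge 4$, a chord. Deleting $v$ yields a chordal graph with the shorter Hamiltonian cycle obtained by replacing the path $v^{-}vv^{+}$ by the edge $v^{-}v^{+}$; by induction it has a non-crossing triangulation of the $(\ell-1)$-gon, and gluing back the ear triangle on $\{v^{-},v,v^{+}\}$ across $v^{-}v^{+}$ restores a non-crossing triangulation of the original polygon that uses only edges of the graph (any further chords at $v$ are simply ignored). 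I expect the delicate points of the write-up to be the planar bookkeeping in this lemma and the incidence identity ``triangles at $v$ equals edges at $v$ minus one'' on which the parity conclusion rests; the two hole/antihole reductions themselves are then routine.
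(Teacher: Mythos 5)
Your proof is correct, but the odd-hole half takes a genuinely different route from the paper. For odd antiholes you and the paper argue essentially identically: independence of $A=W\cap I$ in the antihole forces $|A|\le 2$ (a clique of $C_\ell$), so at most one edge was restored, and an induced $C_4$ of $\overline{C_\ell}$ avoiding that edge survives in $G[W]$, contradicting chordality; your explicit choice of the rotation $s$ is in fact a little more careful than the paper's fixed $4$-cycle $(u_2,u_3,u_1,u_4)$. For odd holes the paper is much more direct: since $A$ is independent in the hole, at most $k$ of the $2k+1$ vertices lie in $I$, so two consecutive hole vertices avoid $I$; neither carries a chord in $G[W]$, so the edge between them lies in no triangle and a shortest cycle of $G[W]$ through it is an induced cycle of length at least $4$ in $G$. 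You instead invoke (and prove by simplicial-vertex ear-cutting) the fact that a Hamiltonian chordal graph has a non-crossing triangulation of its polygon, and then run a parity argument: every vertex outside $A$ lies in exactly one triangle, forcing both its cycle-neighbours into $A$, so $A$ and its complement bipartition the odd cycle into two independent sets --- impossible. This is sound (the incidence count ``triangles at $v$ equals degree at $v$ minus one'' and the ear-induction are both standard and correct), and it buys a pleasant structural explanation of \emph{why} the hole must be even; what it costs is an auxiliary lemma whose planar bookkeeping is heavier than the paper's two-line pigeonhole-plus-shortest-cycle argument. Both arguments are complete; if you want to shorten yours, note that the triangulation machinery can be replaced entirely by the observation about two consecutive vertices outside $I$.
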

\begin{proof}
	Assume $G^{\prime}$ has an odd hole $C_{2k+1}$.
	At most $k$ vertices of $C_{2k+1}$ can be in $I$ since $I$ is an independent set in $G^{\prime}$.
	Hence, there are two consecutive vertices on $C_{2k+1}$ which are not in $I$.
	Since these two vertices do not have a common neighbor in $C_{2k+1}$ and only edges between vertices of $I$ are deleted there exits a cycle in $G$ of length at least four that is contained in $G$ but has no chord.
	In this case $G$ is not chordal which contradicts the assumptions and hence~$G^{\prime}$ cannot have an odd hole.
	
	Now assume that $G^{\prime}$ has an odd antihole $\bar{C}_{2k+1}$, where $C_{2k+1}=u_1\ldots u_{2k+1}u_1$.
	We claim that the subgraph of $G$ induced by $\{u_1,\ldots,u_{2k+1}\}$ has exactly one additional edge in comparison to the subgraph $\bar{C}_{2k+1}$ of $G'$.
	Otherwise if there is no additional edge in $G[\{u_1,\ldots,u_{2k+1}\}]$, then it follows that~$G[\{u_1,\ldots,u_{2k+1}\}]=\bar{C}_{2k+1}$ since no edge is deleted but this contradicts the assumption that $G$ is chordal and hence perfect.
	If there are two or more additional edges, then at least three of the vertices are in~$I$.
	Since all the edges between the vertices in $I$ are deleted, $\bar{C}_{2k+1}$ cannot be an odd antihole in $G^{\prime}$.
	So assume that the additional edge is between $u_2$ and $u_3$ in $G[\{u_1,\ldots,u_{2k+1}\}]$ and thus, these two vertices are the only vertices of $V(\bar{C}_{2k+1})$ in $I$.
	Then the cycle $C=(u_2,u_3,u_1,u_4,u_2)$ is contained in $G$ and has length four but no chord.
	Again this contradicts the assumption that $G$ is chordal and hence $G^{\prime}$ has no odd antihole.
	
	The claim follows by the Strong Perfect Graph Theorem, \cf~Theorem~\ref{strongperfectgraphtheorem} or~\cite{chudnovsky:06}.
\end{proof}

This lemma shows that for a chordal graph the graph obtained after the edge deletion of Algorithm \ref{alg::SDS_gen} is perfect.
We get the following corollary from Theorem~\ref{sds_all_special:thm:poly}:
\begin{corollary}\label{sds_all:special:thm::polytime_solvable}
	In bipartite graphs, chordal graphs and graphs of bounded treewidth we can compute a minimum size \sdset in polynomial time.\qed
\end{corollary}

At this point we refer to Theorem~\ref{thm:sds_np_perfect_graphs} again which states that \sdsprob is \NP-complete when restricted to perfect graphs.
This shows that even though \VCprob is solvable in polynomial time on perfect graphs, Theorem~\ref{sds_all_special:thm:poly} is not applicable to this class.

\section{A 2-Approximation Algorithm for \sdsprob}\label{sec::sds_approx}

For \minVCprob there is an easy $2$-approximation algorithm using maximal matchings, \cf~\cite{Diestel17}.
Together with the result of Theorem \ref{sds_spts:all_spanning_trees:thm:2timesVC} we directly obtain a $4$-approximation for \minsdsprob.
However, we can do better.
In this section we show that there is a $2$-approximation algorithm for \minsdsprob.
The following idea is deduced from another $2$-approximation algorithm for \VCprob using LP-relaxation of the IP-formulation of the problem, \cf~\cite{schrijver-book}.
However, it is more involved than the approximation for \VCprob and therefore worth to be described in detail.
We begin by formulating an inter program for \minsdsprob and prove its correctness.
Then, we use the solution of its LP-relaxation to obtain an integral solution of at most twice the optimal objective function value of the LP and thus at most twice the optimal objective function value of the IP.

(IP~\ref{SDS_spts:2-approx:IP}) describes \minsdsprob for a connected graph $G$.
Let $\CV$ be the set of cutvertices in $G$ and $\NCV\coloneqq V(G)\setminus \CV$.
For every $v\in \CV$ we denote by $\mathcal{B}_v$ the set of blocks of $G$ that contain the vertex $v$.
In a solution the variable $x_v$ states if a vertex~$v$ is in the \SDset or not.
The variable $y_{v,B}$ is only used if $v$ is a cutvertex and states if $v$ is \simdomed by the block $B$.

\begin{mini!}|s|[]
	{x,y}
	{\sum_{v\in V(G)}x_v}
	{\label{SDS_spts:2-approx:IP}}
	{\text{(IP~\ref{SDS_spts:2-approx:IP})}\quad}
	\addConstraint{x_u+x_v}{\geq 1 \quad}{\forall v\in \NCV \text{ and } u\in N_G(v)\label{SDS_spts:2-approx:IP_1}}
	\addConstraint{x_u}{\geq y_{v,B} }{\forall v\in\CV, \forall B\in \mathcal{B}_v,\forall u\in N_B(v)\label{SDS_spts:2-approx:IP_2}}
	\addConstraint{\sum_{B\in\mathcal{B}_v}y_{v,B}+x_v }{\geq 1 }{\forall v\in\CV\label{SDS_spts:2-approx:IP_3}}
	\addConstraint{x_v,y_{v,B}}{\in \{0,1\} \quad}{\forall v\in V(G)\label{SDS_spts:2-approx:IP_4}}
\end{mini!}

\begin{lemma}
	Let $G$ be a graph and $x\in\{0,1\}^{|V(G)|}$. The set~$S=\{v\in V(G)\colon x_v=1\}$ is an \SDset of minimum size if and only if there is a $y$ such that $(x,y)$ is an optimal solution for (IP~\ref{SDS_spts:2-approx:IP}).
\end{lemma}
\begin{proof}
	The lemma follows if we show that the set~$S=\{v\in V(G)\colon x_v=1\}$ is an \SDset if and only if there is a $y$ such that $(x,y)$ is a feasible solution of \eqref{SDS_spts:2-approx:IP_1}--\eqref{SDS_spts:2-approx:IP_4}.
	
	First let $(x,y)$ be a feasible solution for \eqref{SDS_spts:2-approx:IP_1}--\eqref{SDS_spts:2-approx:IP_4} and let $S=\{v\in V(G)\colon x_v=1\}$.
	Note that by \eqref{SDS_spts:2-approx:IP_4} the entries in $x_v$ and $y_{v,B}$ are only $0$ or $1$.
	By \eqref{SDS_spts:2-approx:IP_1} we have for every non-cutvertex that either itself or all its neighbors are in $S$ and thus condition (i) of Theorem~\ref{sdscond} is fulfilled.
	Condition \eqref{SDS_spts:2-approx:IP_3} makes sure that every cutvertex $v$ is in~$S$ or for at least one block $B$ containing $v$ that $y_{v,B}$ has value $1$.
	Hence, together with \eqref{SDS_spts:2-approx:IP_2} all neighbors of $v$ in $B$ are in $S$.
	This implies (ii) of Theorem~\ref{sdscond} and hence it follows that $S$ is a \SDset.
	
	Now assume that $S$ is a \SDset.
	Set $x_v=1$ if $v\in S$ and $x_v=0$ otherwise.
	For every cutvertex $v$ we have that $v$ itself is in $S$ or it is \simdomed, \ie, there is a block $B$ containing $v$ such that all neighbors of $v$ in $B$ are in~$S$.
	We set~$y_{v,B}=1$ if and only if the latter case is true.
	This immediately shows that \eqref{SDS_spts:2-approx:IP_2} and \eqref{SDS_spts:2-approx:IP_3} are fulfilled.
	Condition \eqref{SDS_spts:2-approx:IP_1} is also satisfied by condition~\ref{sdscond_i} of Theorem~\ref{sdscond}.
	This shows that $(x,y)$ is a feasible solution of \eqref{SDS_spts:2-approx:IP_1}--\eqref{SDS_spts:2-approx:IP_4}.
\end{proof}

Next consider the LP-relaxation:
\begin{mini!}|s|[]
	{x,y}
	{\sum_{v\in V(G)}x_v}
	{\label{SDS_spts:2-approx:LP}}
	{\text{(LP~\ref{SDS_spts:2-approx:LP})}\quad}
	\addConstraint{x_u+x_v}{\geq 1 \quad}{\forall v\in \NCV \text{ and } u\in N_G(v)\label{SDS_spts:2-approx:LP_1}}
	\addConstraint{x_u}{\geq y_{v,B} }{\forall v\in\CV, \forall B\in \mathcal{B}_v,\forall u\in N_B(v)\label{SDS_spts:2-approx:LP_2}}
	\addConstraint{\sum_{B\in\mathcal{B}_v}y_{v,B}+x_v }{\geq 1 }{\forall v\in\CV\label{SDS_spts:2-approx:LP_3}}
	\addConstraint{x_v,y_{v,B}}{\geq 0 \quad\quad}{\forall v\in V(G)\label{SDS_spts:2-approx:LP_4}}
\end{mini!}
Let $(x,y)$ be an optimal solution for (LP~\ref{SDS_spts:2-approx:LP}).
We construct a new solution~$(x^{\prime},y^{\prime})$ that is integral in the end and at most doubles the objective function value of $(x,y)$.

The idea is to round at least one variable in \eqref{SDS_spts:2-approx:LP_1} up so \eqref{SDS_spts:2-approx:IP_1} is fulfilled.
It remains to ensure that \eqref{SDS_spts:2-approx:IP_2} and \eqref{SDS_spts:2-approx:IP_3} are satisfied for the cutvertices in $G$.
To do so we use the block graph $T$ of $G$.
We regard the cutvertices of $G$ bottom up in the tree~$T$ and if necessary round up the $x$-variable of the cutvertex itself, while decreasing some $x$-values of neighbors of the cutvertex in order to maintain the approximation quality.
During all rounding steps we ensure that the current solution remains feasible for (LP~\ref{SDS_spts:2-approx:LP}) such that after making all variables integral the resulting solution automatically induces an \SDset.
Further, any variable that is at some point set to $1$ is never changed again, implying that only fractional variables are rounded down.
\paragraph*{First Rounding Step:}
For all $v\in V(G)$ set $x^{\prime}_v:=1$ if $x_v\geq \frac{1}{2}$ and otherwise~$x^{\prime}_v:=x_v$.
Moreover, for each cutvertex $v$ and each block $B$ with $v\in V(B)$ we set
\begin{align}
	\label{SDS_spts:2-approx::first_rounding_y_def}
	y^{\prime}_{v,B}:=\min\{ x^{\prime}_u\colon u\in N_B(v) \}.
\end{align} Whenever we make a change to a variable $x^\prime$ in any rounding step we update all respective variables $y^\prime$ and thus assume that \eqref{SDS_spts:2-approx::first_rounding_y_def} remains valid throughout the procedure.

After the first rounding step, all constraints~\eqref{SDS_spts:2-approx:IP_1} are already fulfilled as by~\eqref{SDS_spts:2-approx:LP_1} one of the two variables is greater or equal to $\frac{1}{2}$.
Since we never decrease a variable with value $1$ this does not change during the preceding rounding steps.
Further, note that all variables now have a value of $1$ or less than $\frac{1}{2}$.
We keep this invariant throughout the remaining rounding steps.

Now regard the block graph $T$ of $G$ and root it at any cutvertex.
It is easily observed that we may now recursively choose a cutvertex $v$ such that all descendants of $v$ in~$T$ that are cutvertices have already been regarded.
If for some block $B$ containing $v$ we have $y^\prime_{v,B}\geq\frac{1}{2}$, then by \eqref{SDS_spts:2-approx::first_rounding_y_def} it holds that $y^\prime_{v,B}=1$, which implies that the vertex~$v$ is \simdomed by block $B$ and we can safely go to the next cutvertex.
So assume that~$y^\prime_{v,B}<\frac{1}{2}$ for all blocks containing $v$.
We denote by $B^\prime$ the parent of $v$ in~$T$ and by $B_1,\dots, B_k$ its children.
As $y^\prime_{v,B^\prime}<\frac{1}{2}$, by constraint~\eqref{SDS_spts:2-approx:LP_2} it holds that 
\begin{align*}
	x^\prime_v + \sum_{i=1}^{k}y^\prime_{v,B_i}\geq\frac{1}{2}.
\end{align*}
For every $i=1,\dots,k$ there exists some node $u_i$ fulfilling $x_{u_i}^\prime=y^\prime_{v,B_i}$ by~\eqref{SDS_spts:2-approx::first_rounding_y_def}.
We can use these vertices to define our next rounding step.

\paragraph*{Second Rounding Step} For every cutvertex $v$ moving bottom up in the block graph~$T$ of $G$, test if $y^\prime_{v,B}\geq\frac{1}{2}$ for some block $B$ containing $v$.
If none such block exists, set $x^\prime_{v}=1$ and $x_{u_i}^\prime=0$ for all $i=1,\dots, k$.

Note that after each of these rounding steps if we increase $x_v^\prime$ we may safely set $y^\prime_{v,B_i}$ to~$0$, as the constraint~\eqref{SDS_spts:2-approx:IP_3} is satisfied due to $x^\prime_v=1$.
Thus, all constraints~\eqref{SDS_spts:2-approx:IP_2} corresponding to the cutvertex $v$ are satisfied after the rounding step.
Further, decreasing variables that have value less than~$\frac{1}{2}$ does not violate any constraint, as all constraints corresponding to vertices in the children of $v$ are satisfied solely by variables that are already set to $1$.
With these arguments we can be sure that after any second rounding step, the solution remains feasible.
Note that it is possible that we have to update some $y^\prime$ variables, as we changed the value of some $x^\prime$ variables and the minimum in~\eqref{SDS_spts:2-approx::first_rounding_y_def} may have changed. 

We argue later that these rounding steps do not increase the objective value of the current solution too much.

\paragraph*{Third Rounding Step}
After iterating through all cutvertices we set all remaining fractional variables to $0$.
\begin{theorem}
	The described algorithm is a $2$-approximation algorithm for \minsdsprob and runs in polynomial time.
\end{theorem}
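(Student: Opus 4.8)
The plan is to show three things: the algorithm terminates with an integral point that is feasible for $(IP)$ (hence induces an SD-set by the preceding lemma relating feasible integral points of $(IP)$ to SD-sets), that its objective value is at most $2\cdot\mathrm{OPT}_{LP}\le 2\cdot\mathrm{OPT}_{IP}$, and that every step runs in polynomial time. Feasibility is essentially already established in the description of the rounding steps, so first I would only record it carefully: the first step satisfies every constraint \eqref{IP::SDS::1} integrally since one endpoint is rounded to $1$, and the bottom-up second step guarantees that for each cut vertex $v$ either some block already has $y'_{v,B}=1$ (all of $N_B(v)$ rounded to $1$) or $x'_v$ is forced to $1$, so that \eqref{IP::SDS::3} holds with integral variables; because every modification keeps \eqref{LP::SDS::2} satisfied and decreases only variables below $\tfrac12$ whose constraints are carried by already-fixed $1$-variables, setting the remaining fractional variables to $0$ in the third step yields an integral feasible point of $(IP)$.

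The heart of the proof is bounding the cost, and here I would use a global charging argument rather than tracking the objective step by step. Let $(x,y)$ be the optimal LP solution and let $S=A\cup F$ be the produced SD-set, where $A=\{v:x_v\ge\tfrac12\}$ collects the vertices rounded up in the first step (these are never decreased again) and $F$ is the set of cut vertices forced to $1$ in the second step; note $A\cap F=\emptyset$ since every $v\in F$ has $x_v<\tfrac12$. I charge the unit cost of each element of $S$ to LP mass. Each $a\in A$ is charged to $2x_a\ge 1$. For $v\in F$, at the moment $v$ is processed the current point is LP-feasible, so by \eqref{LP::SDS::3}, with $y'_{v,B_i}=x'_{u_i}$ for the children blocks $B_1,\dots,B_k$ and $y'_{v,B'}<\tfrac12$ for the parent block $B'$, we obtain $x'_v+\sum_{i=1}^k x'_{u_i}\ge 1-y'_{v,B'}>\tfrac12$; since $x'_v=x_v$ and $x'_{u_i}\le x_{u_i}$ this gives $2\big(x_v+\sum_{i=1}^k x_{u_i}\big)>1$, so I charge the unit cost of $v$ to $2\big(x_v+\sum_{i=1}^k x_{u_i}\big)$.

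The key step — and the main obstacle — is to verify that no LP variable is charged twice, which is exactly where the block–cutpoint tree structure enters. I would argue the following disjointness facts. First, the minimizers $u_i$ have value strictly below $\tfrac12$, hence lie outside $A$; and because any forced vertex has value $1$ after processing, no $u_i$ can itself be forced, so the $u_i$ are disjoint from $A\cup F$. Second, for a fixed $v$ the $u_i$ are distinct, and across distinct forced vertices $v\ne v'$ no vertex is a child-block minimizer of both: such a vertex would have to be a child cut vertex of two different children blocks, contradicting that each cut vertex has a unique parent block in $T$, while a non-cut vertex lies in a single block whose parent cut vertex is determined. Third, the self-charges $x_v$ for $v\in F$ are pairwise distinct and are never reused as a minimizer of the parent cut vertex $p$, because if $p$ is forced then all $y'_{p,\cdot}<\tfrac12$, so the minimizer of $p$ in the block it shares with $v$ cannot be the already-fixed vertex $v$. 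Combining these, the total charge is at most $2\sum_{w\in V}x_w=2\cdot\mathrm{OPT}_{LP}$, whence $|S|=|A|+|F|\le 2\cdot\mathrm{OPT}_{LP}\le 2\cdot\mathrm{OPT}_{IP}$, the last inequality because $(LP)$ relaxes $(IP)$.

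Finally, for the running time I would note that $(LP)$ has polynomially many variables and constraints — one $x_v$ per vertex and one $y_{v,B}$ per cut-vertex/block incidence, with \eqref{LP::SDS::1}–\eqref{LP::SDS::3} bounded by the numbers of edges and block incidences — so an optimal LP solution is computable in polynomial time, and each rounding step (the first linear, the second a single bottom-up pass over $T$ with local $y'$-updates, the third linear) is clearly polynomial. I expect the disjointness bookkeeping of the charging argument to be the only delicate point; everything else is routine once feasibility and the polynomial size of $(LP)$ have been recorded.
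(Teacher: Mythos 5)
Your proposal is correct and follows the same route as the paper: solve the LP relaxation, apply the three rounding steps, verify that the resulting integral point is feasible for the IP (hence induces an SD-set), bound the cost by charging each variable rounded up to $1$ against at least $\tfrac{1}{2}$ of LP mass, and observe that everything runs in polynomial time. The only difference is one of rigor: the paper's cost bound is the one-line remark that each first or second rounding step replaces a set of variables of total value at least $\tfrac{1}{2}$ by a single $1$, leaving implicit that these charged sets are pairwise disjoint across steps, whereas you correctly identify this disjointness as the delicate point and establish it via the parent/child structure of the block--cutpoint tree --- a welcome completion of the same argument rather than a different proof.
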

\begin{proof}
	First we show that the objective value of $(x',y')$ is at most twice the optimal objective value of (IP~\ref{SDS_spts:2-approx:IP}).
	In every first or second rounding step we replace the value of a subset of variables, which have summed up value at least~$\frac{1}{2}$, by the value $1$.
	This clearly implies that the defined solution has objective value at most twice the objective value of the optimal LP solution.
	
	We now show that $(x^{\prime},y^{\prime})$ is a feasible solution for (IP~\ref{SDS_spts:2-approx:IP}).
	All entries in $x^{\prime}$ and $y^{\prime}$ are integral.
	In~\eqref{SDS_spts:2-approx:LP_1} $x_u$ or $x_v$ was greater or equal to $\frac{1}{2}$ and hence $x^{\prime}_u$ or~$x^{\prime}_v$ was set to $1$.
	We do not decrease it later on, so \eqref{SDS_spts:2-approx:IP_1} is satisfied.
	Moreover, we made sure that for every cutvertex $v$ at least one of the variables $x^{\prime}_v$ or $y^{\prime}_{v,B}$ for some block $B$ containing~$v$ equals $1$ and hence, \eqref{SDS_spts:2-approx:IP_3} is fulfilled.
	Condition \eqref{SDS_spts:2-approx:IP_2} is also satisfied since we set $y^{\prime}_{v,B}$ only to $1$ if all the corresponding $x_u$ equal $1$ otherwise we set it to $0$.
	
	This shows that $(x^{\prime},y^{\prime})$ is a feasible solution for (IP~\ref{SDS_spts:2-approx:IP}) that has at most twice the value of the objective function value of an optimal solution of (LP~\ref{SDS_spts:2-approx:LP}) and hence, of (IP~\ref{SDS_spts:2-approx:IP}).
	
	We need polynomial time to set up and solve (LP~\ref{SDS_spts:2-approx:LP}), \cf~\cite{schrijver-book}.
	All rounding steps can be implemented to run in polynomial time.
\end{proof}

\section{Conclusion}

We considered the problem of simultaneously dominating every spanning tree in a graph.
We proved that in a $2$-connected graph a subset of the vertices is a \sdset if and only if it is a vertex cover.
Although finding a minimum vertex cover and finding a minimum \sdset is thereby strongly related, crucial differences remain. 
On the one hand we proved that the size of a minimum \sdset and the size of a minimum vertex cover may differ by a factor of $2$ and that this bound is tight. 
On the other hand, we proved that \sdsprob is \NP-complete on perfect graphs, whereas \VCprob is polynomial time solvable. 
This also implies that \sdsprob is solvable in polynomial time on $2$-connected, perfect graphs. 
Afterwards, we presented an algorithm that solves \sdsprob by decomposing it into smaller subproblems that can be solved by some preprocessing and an oracle for the \minVCprob.
We argued that the algorithm can be implemented to run in linear time, when the input graph is restricted to bipartite graphs, chordal graphs, or graphs of bounded treewidth.
Finally, we presented a $2$-approximation based on LP-rounding.

\newpage
\bibliographystyle{alpha}
\bibliography{Verzeichnis}
\label{sec:biblio}

\end{document}